\newcommand{\nc}{\newcommand}
\newtheorem{theorem}{Theorem}[section]
\newtheorem{lemma}[theorem]{Lemma}
\newtheorem{proposition}[theorem]{Proposition}
\theoremstyle{definition}
\newtheorem{definition}[theorem]{Definition}
\theoremstyle{remark}
\theoremstyle{example}
\newtheorem{example}[theorem]{Example}
\numberwithin{equation}{section}
\nc{\ess}{\mathop{ess\;sup}}
\nc{\esi}{\mathop{ess\;inf}}
\begin{document}

\title[]{Operators  of Harmonic Analysis in Variable Exponent Lebesgue Spaces. Two--weight Estimates}%
\author{Vakhtang  Kokilashvili, Alexander  Meskhi and Muhammad Sarwar} %
%\address{}%
%\email{}%

%\thanks{}%
%\subjclass{}%
%

%\date{}%
%\dedicatory{}%
%\commby{}%
% ----------------------------------------------------------------
\keywords{}
\maketitle
\noindent{\bf Abstract.} In the paper two--weighted norm estimates  with general weights for Hardy-type transforms, maximal functions, potentials and Calder\'on--Zygmund singular integrals in variable exponent Lebesgue spaces defined on quasimetric measure spaces $(X, d, \mu)$ are established. In particular,  we derive integral--type easily verifiable sufficient conditions governing two--weight inequalities for these operators. If exponents of Lebesgue spaces are constants, then most of the derived conditions are simultaneously necessary and sufficient for appropriate inequalities.  Examples of weights governing the boundedness of  maximal, potential and singular operators in weighted variable exponent Lebesgue spaces are given.

\vskip+0.2cm
\noindent{\bf Key words:} Variable exponent Lebesgue spaces, Hardy transforms, fractional and singular integrals, quasimetric measure spaces, spaces of homogeneous type, two-weight inequality.
\vskip+0.2cm

\noindent{\bf AMS Subject Classification}: 42B20, 42B25, 46E30

%\bibliographystyle{amsplain}
%\bibliography{}

\vskip+1cm
\section*{Introduction}
We study the two-weight problem  for Hardy-type,  maximal,
potentials and singular operators in Lebesgue spaces with
non-standard growth defined on quasimetric measure spaces. In
particular, our aim is to derive easily verifiable sufficient
conditions for the boundedness of these operators in weighted
$L^{p(\cdot)}(X)$ spaces which enable us effectively construct
examples of appropriate weights. The conditions  are
simultaneously necessary and sufficient for corresponding
inequalities  when the weights are of special type and the
exponent $p$ of the space is constant. We assume that the exponent
$p$ satisfies the local log-H\"older continuity  condition and if
the diameter of $X$ is infinite, then we suppose that $p$ is
constant outside some ball. In the framework of variable exponent
analysis such a condition first appeared  in the paper \cite{Di1},
where the author established the boundedness of the
Hardy--Littlewood maximal operator in $L^{p(\cdot)}({\Bbb{R}}^n)$.
As far as we know, unfortunately, it is not known an analog of the
log-H\"older decay condition (at infinity) for $p: X\to [1,
\infty)$  even in the unweighted case, which is well--known and
natural for the Euclidean spaces (see \cite{CrFiNe}, \cite{Ne},
\cite{CaCrFi}). The local  log-H\"older continuity   condition for
the exponent $p$ together with the log-H\"older decay condition
guarantees the boundedness of operators  of harmonic analysis in
$L^{p(\cdot)}({\Bbb{R}}^n)$ spaces (see e.g., \cite{CrFiMaPe}).

A considerable interest of researchers is attracted to the study
of mapping properties of integral operators defined on
(quasi-)metric measure spaces. Such spaces with doubling measure
and all their generalities naturally arise when studying boundary
value problems for partial differential equations with variable
coefficients, for instance, when the quasi–metric might be induced
by a differential operator, or tailored to fit kernels of integral
operators. The problem of the boundedness of integral operators
naturally arises also in the Lebesgue spaces with non-standard
growth.

Historically the boundedness of the Hardy--Littlewood maximal,
potential and singular operators in $L^{p(\cdot)}$ spaces defined
on (quasi)metric measure spaces was derived in \cite{HaHaLa},
\cite{HaHaPe}, \cite{KoMe3}, \cite{KoMe5},
\cite{KoSa3}-\cite{KoSa7}, \cite{AlSa} (see also references cited
therein).

Weighted inequalities for classical operators in $L^{p(\cdot)}_w$
spaces, where $w$ is a power--type weight, were established  in
the papers \cite{KoSa1}-\cite{KoSa3}, \cite{KoSa5}-\cite{KoSa7},
\cite{KoSaSa2}, \cite{EdMe}, \cite{SaVa}, \cite{SaSaVa},
\cite{DiSa} (see also the survey papers \cite{Sa3}, \cite{Ko}),
while the same problems with general weights for Hardy, maximal,
potential and singular operators were studied in
\cite{EdKoMeJFSA}-\cite{EdKoMe2}, \cite{KoMe4}, \cite{KoSa3},
\cite{KoSa5}, \cite{Kop}, \cite{Di3}, \cite{AsiKoMe}, \cite{MaZe},
\cite{DiHa}. Moreover, in the paper \cite{DiHa} a complete
solution of the one--weight problem for maximal functions defined
on Euclidean spaces are given in terms of Muckenhoupt--type
conditions. Finally we notice that in the paper \cite{EdKoMe2}
modular--type sufficient conditions  governing the two--weight
inequality for maximal and singular operators were established.

%while the same problems with general weights for Hardy, maximal, potential and singular operators   were studied in , \cite{KoMe4}, \cite{KoSa3}, %$[33-34]$, \cite{Di3}, \cite{AsiKoMe}, \cite{DiHa}. Moreover, in the paper \cite{DiHa} a complete solution of the one--weight problem for maximal %functions defined on Euclidean spaces are given in terms of Muckenhoupt--type conditions.

It should be emphasized that in the classical Lebesgue spaces the
two--weight problem for fractional integrals is already solved
(see \cite{KoKr}, \cite{KoMi}) but it is often useful  to
construct concrete examples of weights from transparent and easily
verifiable conditions. This problem for singular integrals still
remains open. However, some sufficient conditions governing
two--weight estimates for the Calder\'{o}n--Zygmund operators were
given in the papers \cite{EdKo}, \cite{CrMaPe} (see also the
monographs \cite{EdKoMe}, \cite{Vo} and references cited therein).

To derive two--weight estimates for maximal, singular and
potential operators  we use the appropriate inequalities for
Hardy--type transforms on $X$ (which are also derived in this
paper).

The paper is organized as follows: In Section 1 we give some
definitions and auxiliary results regarding quasimetric measure
spaces and the variable exponent Lebesgue spaces. Section 2 is
devoted to the sufficient conditions governing  two--weight
inequalities for Hardy--type defined on quasimetric measure
spaces, while in Section 3 we study the two--weight problem for
potentials defined on quasimetric measure spaces. In Section 4 we
discuss weighted estimates for maximal and singular integrals.

Finally we point out that constants (often different constants in
the same series of inequalities) will generally be denoted by $c$
or $C$. The symbol $f(x) \approx g(x)$ means that there are
positive constants $c_1$ and $c_2$ independent of $x$ such that
the inequality $ f(x) \leq c_1 g(x) \leq c_2 f(x)$ holds.
Throughout the paper by the symbol $p'(x)$ is denoted the function
$p(x)/ (p(x)-1)$.

\section{preliminaries}

Let $X:=(X, d, \mu)$ be a topological space with a complete
measure $\mu$ such that the space of compactly supported
continuous functions is dense in $L^1(X,\mu)$ and there exists a
non-negative real-valued function  (quasi-metric) $d$ on $X\times
X$ satisfying the conditions:

\vskip+0.1cm

\noindent(i) $d(x,y)=0$ if and only if  $x=y$;

%\noindent(ii) $d(x,y)> 0$ for all $x\neq y$, $x,y\in X$;

\noindent(ii) there exists a constant $a_1> 0$, such that
$d(x,y)\leq a_1(d(x,z)+d(z,y))$ for all $x,\,y,\,z\in X$;

\noindent(iii) there exists a constant $a_0> 0$, such that $d(x,y)
\leq a_0 d(y,x) $  for all $x,\,y,\in X$.

We assume that the balls $B(x,r):=\{y\in X:d(x,y)<r\}$ are
measurable and $0\leq\mu (B(x,r))<\infty$ for all $x \in X$ and
$r>0$; for every neighborhood $V$ of $x\in X,$ there exists $r>0,$
such that $B(x,r)\subset V.$ Throughout the paper we also suppose
that $\mu \{x\}=0$ and that
$$  B(x,R) \setminus B(x, r) \neq \emptyset \eqno{(1)}$$
for all  $x\in X$, positive $r$ and $R$ with  $0< r < R < L$,
where
$$L:= diam \; (X) = \;\sup\{ d(x,y): x,y\in X\}. $$

We call the triple $(X, d, \mu)$ a quasimetric measure space. If
$\mu$ satisfies the doubling condition $ \mu(B(x, 2r))\leq c \mu
(B(x,r))$, where the positive constant $c$ does not depend on
$x\in X$ and $r>0$, then $(X, d, \mu)$ is called a space of
homogeneous type $(\hbox{SHT})$. For the definition and some
properties of an $SHT$ see, e.g., \cite{CoWe}, \cite{StTo},
\cite{FoSt}.

A quasimetric measure space, where the doubling condition is not assumed and may fail, is  called a
non-homogeneous space.

Notice that the condition $L<\infty$ implies that $\mu(X)<\infty$ because every ball in $X$ has a finite measure.

We say that the measure $\mu$ is upper Ahlfors $Q$- regular if there is a positive constant $c_1$ such that
$ \mu B(x,r) \leq c_1 r^Q $ for for all $x\in X$ and $r>0$. Further, $\mu$ is lower Ahlfors $q-$ regular
if there is a positive constant $c_2$ such that
$ \mu B(x,r) \geq c_2 r^q  $ for all $x\in X$ and $r>0$.  It is easy to check that if $L<\infty$, then
$\mu$ is lower Ahlfors regular (see also, e.g., \cite{HaHaPe}).

For the boundedness of potential operators in weighted Lebesgue spaces with constant
exponents on non-homogeneous spaces we refer, for example, to the monograph \cite{EdKoMe1} (Ch. 6)
and references cited therein.

Let $p$ be a non--negative $\mu-$ measurable function on $X$. Suppose that $E$ is a $\mu-$ measurable
set in $X$ and $a$ is a constant satisfying the condition $1<a<\infty$. Throughout the paper we use the notation:
\begin{gather*}
 p_-(E):= \inf_{E} p; \;\; p_+(E):= \sup_{E} p; \;\;\  p_-:= p_-(X); \;\; p_+:= p_+(X);\\
 \overline{B}(x,r):= \{ y\in X: \; d(x,y)\leq r \},\;\;  kB(x,r):= B(x,kr);  B_{xy}:=B(x,d(x,y)); \\
 \overline{B}_{xy}:=\overline{B}(x,d(x,y));
g_{B}:=\frac{1}{\mu({B})}\int\limits_{B}|g(x)| d\mu(x).
\end{gather*}
where $1<p_-\leq p_+<\infty$.

\vskip+0.1cm

Assume now that $1\leq p_-\leq p_+< \infty$. The variable exponent Lebesgue space $L^{p(\cdot)}(X)$ (sometimes it is denoted by $L^{p(x)}(X)$) is the class of all $\mu$-measurable functions $f$ on $X$ for which $ S_p(f):= \int\limits_{X} |f(x)|^{p(x)} d\mu(x) < \infty. $
The norm in $L^{p(\cdot)}(X)$ is defined as follows:

$$ \|f\|_{L^{p(\cdot)}(X)} = \inf \{ \lambda>0: S_{p}(f/\lambda) \leq 1 \}. $$

It is known (see e.g. \cite{KoRa}, \cite{Sa1}, \cite{KoSa1}, \cite{HaHaPe}) that $L^{p(\cdot)}$
space is a Banach space. For other properties of
$L^{p(\cdot)}$ spaces  we refer to \cite{Sh}, \cite{KoRa}, \cite{Sa1}, \cite{Sa3}, \cite{Ko}, etc.

\vskip+0.1cm

Now we introduce several definitions:
\vskip+0.1cm

%Definition 1.1.
\begin{definition} Let $(X, d, \mu)$ be a quasimetric measure space and let $N \geq 1$ be  a constant.
Suppose that $p$ satisfy
the condition $0<p_-\leq p_+<\infty$. We say that $p \in {\mathcal{P}}(N,x)$, where $x\in X$, if there are
positive constants
$b$ and $c$ (which might be depended on $x$) such that
$$ \mu(B(x, Nr))^{p_-(B(x,r))- p_+(B(x,r))} \leq c \eqno{(2)} $$
holds for all $r$, $0<r\leq b.$ Further, $p \in {\mathcal{P}}(N)$ if there are a positive constants $b$ and $c$ such
that (2) holds for all $x\in X$ and all $r$ satisfying the condition $0<r\leq b.$
\end{definition}

%Definition 1.2
\begin{definition}

Let $(X, d, \mu)$ be an $SHT$. Suppose that $0<p_-\leq
p_+<\infty$. We say that  $p\in LH(X,x)$ ( $p$ satisfies the
log-H\"older-- type condition at  a point $x\in X$) if there are
positive constants $b$ and $c$ (which might  be depended on $x$)
such that
$$  |p(x)-p(y)| \leq \frac{c}{-\ln\big(\mu (B_{xy}\big)\big)} \eqno{(3)}$$
holds for all $y$ satisfying the condition  $d(x,y)\leq b.$
Further,  $p\in LH(X)$ ( $p$ satisfies the log-H\"older type
condition on $X$)if there are positive constants $b$ and $c$ such
that $(3)$ holds for all $x,y$ with $d(x,y)\leq b.$
\end{definition}

%Definition 1.3.
\begin{definition}Let $(X, d, \mu)$ be a quasimetric measure space and let $0<p_-\leq p_+<\infty$.
We say that $p\in \overline{LH}(X,x)$ if there are  positive
constants $b$ and  $c$ (which might be depended on $x$) such that
$$  |p(x)-p(y)| \leq \frac{c}{-\ln d(x,y)}\eqno{(4)}$$
for all $y$ with $d(x,y)\leq b.$  Further, $p\in \overline{LH}(X)$
if (4) holds for all $x,y$ with $d(x,y)\leq b.$
\end{definition}

It is easy  to see that if the measure $\mu$ is upper Ahlfors
$Q$-regular and $p\in LH(X)$ (resp. $p\in LH(X,x)$), then $p\in
\overline{LH}(X)$ (resp. $p\in \overline{LH}(X,x)$. Further, if
$\mu$ is lower Ahlfors $q$-regular and $p\in \overline{LH}(X)$
(resp. $p\in \overline{LH}(X,x)$), then $p\in LH(X)$ (resp. $p\in
LH(X,x)$). \vskip+0.1cm

%Remark 1.1.
{\em Remark} 1.1. It can be checked easily that if $(X, d, \mu)$ is an SHT,
then $\mu B_{x_0x}\approx \mu B_{xx_0}.$

\vskip+0.1cm

{\em Remark} 1.2.  Let $(X, d, \mu)$ be an $\hbox{SHT}$ with
$L<\infty$. It is known (see,  e.g., \cite{HaHaPe}, \cite{KoMe3})
that if $p\in{\overline{LH}}(X)$), then $p\in\mathcal{P}(1)$.
Further, if $\mu$ is upper Ahlfors $Q$-regular, then the condition
$p\in\mathcal{P}(1)$ implies that $p\in \overline{LH}(X)$.

%{\em Remark} 1.3. Due to the doubling condition for $\mu$ we have the following inequality
%$$ (\mu B_{xy})^{p(x)} \leq  c (\mu B(x, d(y,x)))^{p(y)}, $$
%where the positive constant $c$ does not depend on $x,y\in X$.

%Proposition 1.4.
\begin{proposition} If $0<p_-(X)\leq p_+(X)<\infty$ and $p\in
LH(X)$ $($ resp. $p\in \overline{LH}(X) \;)$, then the functions
$c p(\cdot)$ and $1/p(\cdot)$ belong to the class $LH(X)$ $($
resp. $\overline{LH}(X)\;).$ Further if $p\in LH(X,x)$ (resp.
$p\in \overline{LH}(X,x)$)  then $cp(\cdot) and 1/p(\cdot)$ belong
to $LH(X,x)$ $($ resp. $p\in \overline{LH}(X,x)\; )$, where $c$ is
a positive constant. If $1<p_-(X)\leq p_+(X)<\infty$ and $p\in
LH(X)$ $($ resp. $p\in LH(X) \overline{LH}(X)\;)$, then $p'\in
LH(X,x)$ $($ resp. $p'\in \overline{LH}(X,x))$.
\end{proposition}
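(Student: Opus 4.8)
\medskip

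\noindent\textbf{Proof proposal.} The plan is to reduce every assertion to two elementary manipulations of the defining inequalities $(3)$ and $(4)$. Observe first that the $\overline{LH}$--statements are obtained by the verbatim argument used for the $LH$--statements, with $\mu(B_{xy})$ everywhere replaced by $d(x,y)$; likewise the pointwise statements follow from exactly the same estimates, the constants $b,c$ now being permitted to depend on $x$. Hence I will treat only the global $LH(X)$ claims. For $cp(\cdot)$ the assertion is immediate: since $|cp(x)-cp(y)|=c\,|p(x)-p(y)|$, $(3)$ for $p$ yields $(3)$ for $cp$ with the constant on the right--hand side multiplied by $c$ (the denominators $-\ln\mu(B_{xy})$ being understood positive for $d(x,y)\le b$, as built into the hypothesis $p\in LH(X)$, after decreasing $b$ if necessary). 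Note also that $0<(cp)_-(X)\le(cp)_+(X)<\infty$, so the conclusion makes sense in the sense of Definition 1.2.

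Next, for $1/p(\cdot)$ I would write
\[
\Big|\frac{1}{p(x)}-\frac{1}{p(y)}\Big|=\frac{|p(y)-p(x)|}{p(x)\,p(y)}\le \frac{1}{(p_-(X))^{2}}\,|p(x)-p(y)|,
\]
where the hypothesis $p_-(X)>0$ is used in an essential way; combining this with $(3)$ for $p$ gives $(3)$ for $1/p$ with constant $c/(p_-(X))^{2}$, and $0<1/p_+(X)\le 1/p(\cdot)\le 1/p_-(X)<\infty$. The same inequality, applied to an arbitrary nonnegative $q\in LH(X)$ with $q_-(X)>0$, shows $1/q\in LH(X)$; I record this ``reciprocal step'' for use below.

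Finally, for the assertion on $p'=p/(p-1)$, assume $1<p_-(X)\le p_+(X)<\infty$ and $p\in LH(X)$. Since $\frac{1}{p'(x)}=1-\frac{1}{p(x)}$, one has $\big|\frac{1}{p'(x)}-\frac{1}{p'(y)}\big|=\big|\frac{1}{p(x)}-\frac{1}{p(y)}\big|$, so by the previous paragraph the function $q:=1/p'(\cdot)$ lies in $LH(X)$. Moreover $q(x)=1-1/p(x)\ge 1-1/p_-(X)>0$, the strict positivity being exactly what the hypothesis $p_-(X)>1$ provides (equivalently, $p'_+(X)<\infty$). Applying the reciprocal step to $q$ gives $p'=1/q\in LH(X)$, hence in particular $p'\in LH(X,x)$ for every $x\in X$; the $\overline{LH}$ case is identical. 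I do not foresee a genuine obstacle, as the statement is elementary; the only points deserving care are keeping the logarithmic denominators positive (by shrinking $b$) and recognizing that the lower bound $1-1/p_-(X)>0$ for $1/p'$, which makes the reciprocal step applicable, is precisely the hypothesis $p_-(X)>1$ — without it $p'_+(X)=\infty$ and the argument breaks down.
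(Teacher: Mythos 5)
Your proof is correct and is exactly the elementary argument the paper has in mind: the paper omits the proof entirely, stating only that the claim "follows immediately from the definitions," and your three steps (scaling for $cp$, the bound $|1/p(x)-1/p(y)|\le |p(x)-p(y)|/(p_-(X))^2$ for the reciprocal, and $1/p'=1-1/p$ combined with $1-1/p_-(X)>0$ for the conjugate exponent) supply precisely the omitted details. No gaps.
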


Proof of this statement   follows immediately from the definitions
of the classes $LH(X,x)$, $LH(X)$, $\overline{LH}(X,x)$,
$\overline{LH}(X)$; therefore we omit the details.

%Proposition 1.5.
\begin{proposition}
Let $(X, d, \mu)$ be an $\hbox{SHT}$ and let $p\in {\mathcal{P}}(1)$. Then $ (\mu B_{xy})^{p(x)}
\leq  c (\mu B_{yx})^{p(y)}, $
for all $x,y\in X$ with $\mu(B(x, d(x,y))) \leq b,$ where $b$ is a small  constant, and the
constant $c$ does not depend on $x,y\in X$.
\end{proposition}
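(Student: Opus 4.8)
The plan is to reduce the claimed inequality to the defining condition $(2)$ of the class $\mathcal{P}(1)$ by a handful of routine manipulations with exponents, all of which lie in the fixed finite range $(0,\infty)$ between $p_-$ and $p_+$. If $x=y$ both sides vanish, so assume $x\neq y$ and set $r:=d(x,y)>0$, $B_{xy}=B(x,r)$. By Remark 1.1, $\mu B_{xy}\approx\mu B_{yx}$ with constants not depending on $x,y$; since $0<p_-\le p(y)\le p_+<\infty$, raising this comparison to the power $p(y)$ changes it only by a constant factor, so $(\mu B_{yx})^{p(y)}\approx(\mu B_{xy})^{p(y)}$. Thus it is enough to prove $(\mu B_{xy})^{p(x)-p(y)}\le c$. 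By hypothesis $\mu B_{xy}=\mu\big(B(x,r)\big)\le b$, and, taking $b<1$, we may assume $\mu B_{xy}<1$.

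Now split on the sign of $p(x)-p(y)$. If $p(x)\ge p(y)$, then $(\mu B_{xy})^{p(x)-p(y)}\le1$ because $0<\mu B_{xy}<1$. If $p(x)<p(y)$, set $B':=B(x,2r)$; since $x\neq y$ we have $x\in B'$, and since $d(x,y)=r<2r$ also $y\in B'$, so $p(y)-p(x)\le p_+(B')-p_-(B')$. Using $\mu B_{xy}<1$ once more,
$$(\mu B_{xy})^{p(x)-p(y)}\le(\mu B_{xy})^{p_-(B')-p_+(B')}.$$
By the doubling condition, $\mu B'=\mu\big(B(x,2r)\big)\le c_\mu\,\mu\big(B(x,r)\big)$, hence $\mu B_{xy}\ge c_\mu^{-1}\mu B'$; since the exponent $p_-(B')-p_+(B')$ lies in $[-(p_+-p_-),0]$, replacing $\mu B_{xy}$ by $\mu B'$ costs only a constant:
$$(\mu B_{xy})^{p_-(B')-p_+(B')}\le c_\mu^{\,p_+-p_-}\,(\mu B')^{p_-(B')-p_+(B')}.$$
Finally $B'=B(x,s)$ with $s=2r$, so as long as $s$ is below the threshold $b_0$ in the definition of $\mathcal{P}(1)$, condition $(2)$ with $N=1$ gives $(\mu B')^{p_-(B')-p_+(B')}\le c$, finishing the estimate.

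I expect the only real obstacle to be the last sentence's proviso: the hypothesis controls the \emph{measure} $\mu(B(x,d(x,y)))$, whereas applying $(2)$ requires control of the \emph{radius} $2r$. This is exactly what the freedom to take $b$ ``small'' is for: one chooses $b$ so small that $\mu(B(x,r))\le b$ forces $2r\le b_0$ uniformly in $x$. When $L<\infty$ this is immediate, since then $\mu$ is lower Ahlfors $q$-regular, so $\mu(B(x,r))\ge c_2 r^q$ and it suffices to take $b\le c_2(b_0/2)^q$. When $L=\infty$ one uses the standing assumption that $p$ is constant outside some fixed ball, together with the reverse doubling consequence of $(1)$, to reduce the inequality to the same local setting. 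Apart from this matching of scales, the argument is just bookkeeping with exponents, uniformly controlled by $p_-$ and $p_+$.
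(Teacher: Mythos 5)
Your argument is correct and is essentially the paper's: both enlarge $B_{xy}$ to a ball containing $x$ and $y$, switch the exponent there via condition (2) of the class ${\mathcal{P}}(1)$, and return by doubling (the paper centers the enlarged ball at $y$, namely $B(y,a_1(a_0+1)d(x,y))$, and writes the three inequalities in one line; you center it at $x$ after first reducing to $(\mu B_{xy})^{p(x)-p(y)}\le c$ via Remark 1.1 and a sign split). The one point where you go beyond the paper --- reconciling the hypothesis on the \emph{measure} $\mu(B(x,d(x,y)))$ with the \emph{radius} threshold in the definition of ${\mathcal{P}}(1)$ --- is a real issue that the paper's one-line proof silently ignores; your resolution via lower Ahlfors regularity settles it when $L<\infty$ (the setting in which the proposition is actually used later), though for $L=\infty$ you lean on a hypothesis ($p$ constant outside a ball) that is not part of the proposition's statement, so that case remains as informal in your write-up as it is in the paper's.
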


\begin{proof} Due to the doubling condition for $\mu$, Remark $1.1$, the condition $p\in {\mathcal{P}}(1)$ and the
fact $x\in B(y, a_1(a_0+1)d(y,x))$ we have the following estimates: $ \mu (B_{xy})^{p(x)} \leq \mu \big( B(y, a_1(a_0+1)d(x,y))\big)^{p(x)} \leq c \mu B(y, a_1(a_0+1)d(x,y))^{p(y)} \leq c (\mu B_{yx})^{p(y)} $, which proves the statement.
\end{proof}

%{\bf Lemma A (\cite{HaHaPe}, \cite{KoMe3})} {\em Let $p_+<\infty$. If $\mu$ is lower Ahlfors regular, then the condition $p\in LH(X)$ implies $p\in %{\Bbb{P}}(1)$. If  $\mu$ is upper Ahlfors regular, then from the he condition $p\in {\Bbb{P}}(1)$ it follows that $p\in LH(X)$.}

The proof of the next statement is trivial and follows directly from the definition of the classes ${\mathcal{P}}(N,x)$ and ${\mathcal{P}}(N)$. Details are omitted.

%Proposition 1.6.
\begin{proposition} Let $(X, d, \mu)$ be a quasimetric measure space and let $x_0\in X$. Suppose
that $N\geq1$ be a constant.  Then the following statements hold:

\rm{(i)} If $p\in {\mathcal{P}}(N,x_0)$ (resp.  $p\in {\mathcal{P}}(N))$,  then there are positive constants
$r_0$, $c_1$ and $c_2$ such that
for all $0<r\leq r_0$ and all $y\in B(x_0,r)$ (resp. for all $x_0, y$ with $d(x_0,y) < r \leq r_0$) we have that
$\mu \big( B(x_0,Nr)\big)^{p(x_0)}\leq c_1 \mu \big( B(x_0,Nr)\big)^{p(y)}\leq c_2 \mu \big( B(x_0,Nr)\big)^{p(x_0)}.$

 \rm{(ii)} Let  $p\in {\mathcal{P}}(N,x_0)$. Then there are positive constants   $r_0$, $c_1$
and $c_2$ (in general, depending on $x_0$) such that for all $r$ ($r\leq r_0$) and all $x,y\in B(x_0,r)$ we have
$\mu\big( B(x_0,Nr)\big)^{p(x)}\leq c_1\mu\big( B(x_0,Nr)\big)^{p(y)}\leq c_2\mu\big( B(x_0,Nr)\big)^{p(x)}.$

 \rm{(iii)} Let  $p\in {\mathcal{P}}(N)$. Then there are positive constants   $r_0$, $c_1$
and $c_2$ such that for all balls $B$ with radius $r$ ($r\leq r_0$) and all $x,y\in B$, we have
$\mu(N B)^{p(x)}\leq c_1\mu(N B)^{p(y)}\leq c_2\mu( N B)^{p(x)}.$
\end{proposition}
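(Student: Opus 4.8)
The plan is to reduce all three parts to a single elementary estimate: for a ball $B=B(x_0,r)$ of small enough radius and any two points $u,v$ lying in the relevant set, the quantity $\mu(NB)^{|p(u)-p(v)|}$ is bounded by a constant independent of everything. Indeed, once we know $\mu(NB)^{p(u)-p(v)}\le c$ and $\mu(NB)^{p(v)-p(u)}\le c$, multiplying through by $\mu(NB)^{p(v)}$ (a positive finite number, since balls have finite positive measure by the standing assumptions) immediately gives the two-sided comparison $c_1^{-1}\mu(NB)^{p(v)}\le\mu(NB)^{p(u)}\le c_2\mu(NB)^{p(v)}$ in the form stated. So the whole content is the bound on $\mu(NB)^{|p(u)-p(v)|}$.

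First I would treat part (iii), the uniform case $p\in\mathcal P(N)$. By definition there are $b,c>0$ with $\mu(B(x,Nr))^{p_-(B(x,r))-p_+(B(x,r))}\le c$ for all $x\in X$ and all $0<r\le b$; set $r_0:=b$. Fix a ball $B=B(x_0,r)$ with $r\le r_0$ and $x,y\in B$. There are two cases. If $\mu(NB)\ge 1$, then since $p_-(B)\le p(x),p(y)\le p_+(B)$ we have $p(x)-p(y)\le p_+(B)-p_-(B)$, hence $\mu(NB)^{p(x)-p(y)}\le\mu(NB)^{p_+(B)-p_-(B)}\le c$ by the defining inequality (2) (rewritten with the exponents swapped); symmetrically for $p(y)-p(x)$. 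If $\mu(NB)<1$, then $\mu(NB)^{p(x)-p(y)}\le\mu(NB)^{p_-(B)-p_+(B)}\le c$, and again symmetrically. Either way $\mu(NB)^{|p(x)-p(y)|}\le c$, and the argument above finishes part (iii) with $c_1=c_2=c$.

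Part (ii) is identical except that $p\in\mathcal P(N,x_0)$ only furnishes $b,c$ depending on $x_0$ and the inequality (2) only for $x=x_0$; one applies exactly the same two-case split to $B=B(x_0,r)$ with $x,y\in B$, using $\mu(B(x_0,Nr))^{p_\pm(B(x_0,r))}$ in place of the generic version, so the constants now depend on $x_0$ — which is what the statement allows. Part (i) is the degenerate sub-case $x=x_0$: here $p(x_0)$ lies between $p_-(B(x_0,r))$ and $p_+(B(x_0,r))$, so for $y\in B(x_0,r)$ the same dichotomy on whether $\mu(B(x_0,Nr))$ is $\ge1$ or $<1$ gives $\mu(B(x_0,Nr))^{|p(x_0)-p(y)|}\le c$, and for the class $\mathcal P(N)$ the constants and $r_0$ can be chosen independent of $x_0$ because $b,c$ in (2) are; in the pointwise class $\mathcal P(N,x_0)$ they may depend on $x_0$.

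There is essentially no obstacle here — the only thing to be careful about is the direction of the inequality in the exponent, i.e. keeping straight whether $\mu(NB)$ is at least $1$ or less than $1$, since $t\mapsto t^s$ is increasing or the monotonicity in the exponent flips accordingly; this is exactly why (2) is stated with $p_--p_+$ (a non-positive exponent) rather than its negative. The hypothesis $\mu\{x\}=0$ together with $0\le\mu(B(x,r))<\infty$ guarantees $\mu(NB)\in(0,\infty)$ so that no power is ill-defined, and the remark that $L<\infty\Rightarrow\mu(X)<\infty$ is not even needed. Hence the proof is the routine case analysis sketched above, and the authors are right to omit the details.
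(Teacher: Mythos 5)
Your skeleton --- reducing everything to $\mu(NB)^{|p(u)-p(v)|}\le c$ and splitting on whether $\mu(NB)$ is below or above $1$ --- is the right one, and the case $\mu(NB)<1$ is handled correctly. The gap is in the case $\mu(NB)\ge 1$: condition (2) cannot be ``rewritten with the exponents swapped''. Inequality (2) bounds $\mu(B(x,Nr))^{p_-(B)-p_+(B)}$ from above by $c$; since that exponent is non-positive, the left-hand side is automatically $\le 1$ whenever $\mu(B(x,Nr))\ge 1$, so in this regime the hypothesis $p\in\mathcal{P}(N)$ carries no information at all. In particular it does not give the upper bound $\mu(NB)^{p_+(B)-p_-(B)}\le c$ that you invoke; it gives only the (useless) lower bound $\mu(NB)^{p_+(B)-p_-(B)}\ge 1/c$. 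What is actually needed to close this case is an a priori upper bound $\mu(B(x_0,Nr))\le M$ for all admissible centres and all $r\le r_0$, after which $\mu(NB)^{p_+(B)-p_-(B)}\le \max(1,M)^{p_+-p_-}<\infty$ because $p_+<\infty$. Such a bound holds when $\mu(X)<\infty$ (e.g.\ $L<\infty$), when $\mu$ is upper Ahlfors regular, or on any bounded region --- which covers the settings in which the proposition is later applied --- but it is not a consequence of $p\in\mathcal{P}(N)$ alone, and your proof nowhere supplies it.

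The omission is not cosmetic: without some such bound the asserted conclusion can fail. On $X=\mathbb{R}$ with the Euclidean distance and $d\mu(t)=(1+t^2)\,dt$ (a space of homogeneous type with $L=\infty$), unit-scale balls far from the origin have arbitrarily large measure, so (2) is vacuous for them; taking $p(t)=\tfrac32+\tfrac12\sin(\pi t)$, one checks that $p\in\mathcal{P}(N)$ (the small-measure balls are controlled by the Lipschitz continuity of $p$), yet for every fixed $r_0>0$ the quantity $\mu(B(x,Nr_0))^{p_+(B(x,r_0))-p_-(B(x,r_0))}$ tends to infinity as $|x|\to\infty$, so no constants $c_1,c_2$ can work in part (iii). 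So the step you wave through as a matter of keeping the exponent's sign straight is precisely where an additional hypothesis (finiteness of $\mu$, Ahlfors regularity, or $p$ constant outside a ball) must enter; the paper's assertion that the proposition ``follows directly from the definition'' glosses over the same point.
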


It is known that (see, e.g., \cite{KoRa}, \cite{Sa1}) if $f$ is a measurable function on $X$ and  $E$ is a measurable subset of $X$, then
the following  inequalities hold:
\begin{gather*}
 \|f\|^{p_+(E)}_{L^{p(\cdot)}(E)} \leq S_{p} (f\chi_E) \leq
\|f\|^{p_-(E)}_{L^{p(\cdot)}(E)}, \;\; \|f\|_{L^{p(\cdot)}(E)}\leq
1; \\
 \|f\|^{p_-(E)}_{L^{p(\cdot)}(E)}\leq S_{p}(f\chi_E)\leq
\|f\|^{p_+(E)}_{L^{p(\cdot)}(E)},\;\; \|f\|_{L^{p(\cdot)}(E)}> 1.
\end{gather*}

H\"older's inequality in variable exponent Lebesgue spaces has the following form:

$$\int\limits_{E} fg d\mu \leq \Big(1/p_-(E) + 1/(p')_-(E) \Big) \| f \|_{L^{p(\cdot)}(E)}  \| g \|_{L^{p'(\cdot)}(E)}.$$
\vskip+0.1cm

%The next lemma for the Euclidean spaces was proved  in \cite{Kam} but we give the proof for completeness.
%\vskip+0.1cm

%{\bf Lemma 1.2.} {\em Let $r$ be a constant and $p(\cdot)$ be a measurable function on $X$ such that $1<p(x)\leq r<\infty$. Then there exists a %positive constant $c$ such that for all families of disjoint sets $\{B_k\}$ such that $\cup_kB_k= X$ the inequality
%$$ \sum_{k} \| f \chi_{B_k} \|^r_{L^{p(\cdot)}(X)} \leq c \|f\|_{L^{p(\cdot)}(X)}^r $$
%holds.}

%\vskip+0.1cm
%{\em Proof.} Let $a_k:=\| f \chi_{B_k}\|_{L^{p(\cdot)}(X)}$. Suppose that $\|f\|_{L^{p(\cdot)}(X)} \leq 1$. Then we have
%$$ \sum_{k} \| f \|_{L^{p(\cdot)}(X)}^r = \sum_{\{ k: a_k \leq 1\}}\| f \|_{L^{p(\cdot)}(X)}^r + \sum_{\{ k: a_k > 1\}}\| f \|_{L^{p(\cdot)}(X)}^r := %S_1+ S_2. $$

%Using Lemma 1.1 and the condition $r/p_+\geq 1$ we have
%$$ I_1 \leq \sum_{k} \bigg( \int_{B_k} |f(x)|^{p(x)} d\mu(x) \bigg)^{r/p_+} \leq  \bigg( \int_{X} \bigg( \sum_{k} |f(x)|^{p(x)} \chi_{B_k}(x) \bigg) %d\mu(x) \bigg)^{r/p_+}  \leq \| f\|_{L^{p(\cdot)}}^{rp_-/p_+} \leq 1. $$

%Taking  the fact $r/p_- \geq 1$ into account we obtain
%$$ I_{2} =  \sum_{\{ k: a_k > 1\}} \bigg( \int_{B_k} |f(x)|^{p(x)} d\mu(x) \bigg)^{r/p_-} \leq \bigg( \int_{X} |f(x)|^{p(x)} d\mu(x) \bigg)^{r/p_-} %\leq \| f \|_{L^{p(\cdot)}}^{rp_+/p_-} \leq 1. $$
% $\;\;\;\; \Box$
%\vskip+0.1cm

% The next lemma was proved in \cite{EdKoMe}, Chapters 5 and 6 (see also \cite{KoMe3}):

%Lemma 1.7
\begin{lemma}  Let $(X, d, \mu)$ be an $\hbox{SHT}$.

\rm{(i)} Let   $\beta$ be a measurable function on $X$ such that $\beta_+ < -1$ and let  $r$ be a small
positive number. Then there exists a positive constant $c$ independent of $r$ and $x$ such that
$$ \int\limits_{X \setminus B(x_0,r) } (\mu B_{x_{0}y})^{\beta(x)} d \mu(y) \leq c \frac{\beta(x)+1}{\beta(x)}
\mu(B(x_0,r))^{\beta(x)+1}; $$

\rm{(ii)}  Suppose that $p$ and $\alpha$ are measurable functions on $X$ satisfying the conditions
$1<p_- \leq p_+ <\infty$ and $\alpha_- > 1/p_-$. Then there exists a positive constant $c$ such that for
all $x\in X$ the inequality

$$ \int\limits_{\overline{B} (x_0 , 2d(x_0 ,x))} \big( \mu B(x,d(x,y))\big)
^{(\alpha(x) -1)p'(x)} d\mu (y) \leq c \big( \mu B(x_0 ,d(x_0 ,x)) \big) ^{(\alpha(x) -1)p'(x) +1} $$
holds.
\end{lemma}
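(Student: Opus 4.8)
The plan is to treat (i) and (ii) by a single device: split the domain of integration into dyadic annuli, on each annulus bound $\mu B(\cdot,d(\cdot,\cdot))$ above and below by the measure of one concentric ball via the doubling condition, and sum the resulting geometric series using the reverse doubling property of an $\hbox{SHT}$ (a consequence of $(1)$). Throughout, the dependence of constants on the exponent must be tracked so that the final constant is independent of $x$.

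\medskip
\emph{Part (i).} Fix $x$ and write $\beta:=\beta(x)<-1$. Since $d(x_0,y)\ge r$ on $X\setminus B(x_0,r)$, partition this set as $\bigcup_{k\ge0}A_k$ with $A_k:=B(x_0,2^{k+1}r)\setminus B(x_0,2^kr)$ (only the finitely many $A_k$ with $2^kr\le L$ are nonempty when $L<\infty$). On $A_k$ one has $d(x_0,y)\ge2^kr$, hence $\mu B_{x_0y}\ge\mu B(x_0,2^kr)$, and since $\beta<0$,
$$\int_{A_k}(\mu B_{x_0y})^{\beta}\,d\mu(y)\le(\mu B(x_0,2^kr))^{\beta}\,\mu(A_k)\le c\,(\mu B(x_0,2^kr))^{\beta+1},$$
the last step by doubling, $\mu(A_k)\le\mu B(x_0,2^{k+1}r)\le c\,\mu B(x_0,2^kr)$. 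Reverse doubling provides a structural $\gamma>1$ with $\mu B(x_0,2^kr)\ge\gamma^k\mu B(x_0,r)$ once $k$ exceeds a structural threshold (the boundedly many earlier annuli being controlled by $(\mu B(x_0,2^kr))^{\beta+1}\le(\mu B(x_0,r))^{\beta+1}$); since $\beta+1<0$ this gives $(\mu B(x_0,2^kr))^{\beta+1}\le\gamma^{k(\beta+1)}(\mu B(x_0,r))^{\beta+1}$, and summing,
$$\int_{X\setminus B(x_0,r)}(\mu B_{x_0y})^{\beta}\,d\mu(y)\le\frac{c}{1-\gamma^{\beta+1}}\,(\mu B(x_0,r))^{\beta+1}.$$
It remains only to bound $\bigl(1-\gamma^{\beta+1}\bigr)^{-1}$; an elementary estimate of $1-e^{(\beta+1)\ln\gamma}$ from below, uniform over $\beta<-1$, shows it is at most a constant multiple of $\dfrac{\beta(x)}{\beta(x)+1}$, which gives (i).

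\medskip
\emph{Part (ii).} Put $\beta(x):=(\alpha(x)-1)p'(x)$. The crucial point — and essentially the only use of the hypotheses — is a two-sided bound on $\beta(x)+1$, uniform in $x$. Since $p_->1$ we have $p'(x)\le\frac{p_-}{p_--1}$; if $\alpha(x)\ge1$ then $\beta(x)+1\ge1$, while if $\alpha(x)<1$ then, using $\alpha(x)\ge\alpha_-$ and the identity $\bigl(\tfrac1{p_-}-1\bigr)\tfrac{p_-}{p_--1}=-1$,
$$\beta(x)+1\ge(\alpha_--1)\tfrac{p_-}{p_--1}+1>\bigl(\tfrac1{p_-}-1\bigr)\tfrac{p_-}{p_--1}+1=0.$$
Hence $\beta(x)+1\ge\delta:=\min\bigl\{1,\,(\alpha_--1)\tfrac{p_-}{p_--1}+1\bigr\}>0$ for all $x$, while boundedness of $p'$ (from $p_->1$) and of $\alpha$ gives $\beta(x)+1\le M<\infty$. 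Next, by the quasi-triangle inequality every $y\in\overline B(x_0,2d(x_0,x))$ satisfies $d(x,y)\le Kd(x_0,x)$ with $K:=a_1(a_0+2)$, so it suffices to estimate $\int_{\overline B(x,Kd(x_0,x))}(\mu B(x,d(x,y)))^{\beta(x)}\,d\mu(y)$. Decompose $\overline B(x,Kd(x_0,x))$ into the shells $S_j:=\overline B(x,2^{-j}Kd(x_0,x))\setminus B(x,2^{-j-1}Kd(x_0,x))$, $j\ge0$ (recall $\mu\{x\}=0$). On $S_j$, doubling gives $(\mu B(x,d(x,y)))^{\beta(x)}\le c\,(\mu B(x,2^{-j}Kd(x_0,x)))^{\beta(x)}$ with $c$ independent of $x$ and $j$ (here boundedness of $|\beta(x)|$ is used), whence $\int_{S_j}(\cdots)\le c\,(\mu B(x,2^{-j}Kd(x_0,x)))^{\beta(x)+1}$. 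Reverse doubling gives $\mu B(x,2^{-j}Kd(x_0,x))\le c_1\gamma^{-j}\mu B(x,Kd(x_0,x))$ with structural $\gamma>1$, so, since $\beta(x)+1\in[\delta,M]$, summing over $j$,
$$\int_{\overline B(x,Kd(x_0,x))}\bigl(\mu B(x,d(x,y))\bigr)^{\beta(x)}\,d\mu(y)\le c\,\bigl(\mu B(x,Kd(x_0,x))\bigr)^{\beta(x)+1},$$
the implicit series $\sum_{j\ge0}\gamma^{-j\delta}$ converging with a bound independent of $x$. Finally, the doubling condition together with the comparison $\mu B(x,d(x_0,x))\approx\mu B(x_0,d(x_0,x))$ (Remark 1.1 and doubling; note $x\in B(x_0,2d(x_0,x))$) gives $\mu B(x,Kd(x_0,x))\le c\,\mu B(x_0,d(x_0,x))$, and raising to the bounded nonnegative power $\beta(x)+1$ yields the assertion.

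\medskip
The main obstacle in both parts is the uniformity of the constant in $x$. In (i) it forces one to keep the exact dependence of the geometric series on $\beta(x)$ rather than estimating crudely; in (ii) it reduces precisely to extracting the quantitative gap $\beta(x)+1\ge\delta>0$ from $\alpha_->1/p_-$ together with $p_->1$, for otherwise the geometric series in (ii) would not converge uniformly in $x$. The remaining ingredients — doubling, reverse doubling, the center-change comparison of ball measures, and the separate treatment of the boundedly many largest annuli when $L<\infty$ — are routine.
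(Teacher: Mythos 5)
Your argument is correct, and it supplies what the paper deliberately omits: the authors prove Lemma 1.7 only by citation (part (i) to \cite{KoMe3}, part (ii) to Lemma 6.5.2 of \cite{EdKoMe}, with the remark that the constant-exponent argument survives the passage to variable $\alpha$ and $p$). Your dyadic-annulus decomposition, with doubling to control $\mu(A_k)$ and reverse doubling to force geometric decay of $(\mu B(x_0,2^kr))^{\beta+1}$ (resp.\ of $(\mu B(x,2^{-j}R))^{\beta(x)+1}$), is exactly the standard mechanism behind those cited lemmas, and your two key quantitative observations are the right ones: in (i) the uniform lower bound $1-\gamma^{\beta+1}\geq c\,|\beta+1|/(1+|\beta+1|)$, and in (ii) the uniform gap $(\alpha(x)-1)p'(x)+1\geq\delta>0$ extracted from $\alpha_->1/p_-$, which is precisely where the hypotheses enter and what makes the constant independent of $x$. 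Two small remarks. First, you end up with the factor $\beta(x)/(\beta(x)+1)$ rather than the paper's $(\beta(x)+1)/\beta(x)$; your version is the correct one (the stated factor tends to $0$ as $\beta\to-1^-$ while the integral diverges, and the Euclidean model computation gives $|\beta+1|^{-1}$), so the paper's statement contains a typo that you have silently repaired. Second, in part (ii) the reverse doubling step $\mu B(x,2^{-j}R)\leq c\gamma^{-j}\mu B(x,R)$ needs the same caveat you made explicit in part (i): when $R=Kd(x_0,x)$ exceeds $L$ the first boundedly many shells must be handled separately (their number depends only on structural constants, so this only adjusts $c$). Neither point affects the validity of the proof.
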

\begin{proof} Part (i) was proved in \cite{KoMe3} (see also \cite{EdKoMe}, p.372,
for constant $\beta$). The proof of Part (ii) was given in
\cite{EdKoMe} (Lemma 6.5.2, p. 348) but repeating those arguments
we can see that it is also true for variable $\alpha$ and $p$.
Details are omitted.
\end{proof}

%The boundedness of the Hardy-Littlewood, potential and singular operators in $L^{p(x)}(\Omega)$ $(\Omega \subseteq {\Bbb{R}}^n$) spaces was %established in \cite{Sa2}, \cite{Di1}, \cite{Di2}, \cite{DiRu}, \cite{CrFiNe}, \cite{CrFiMaPe}, \cite{KoSa2}, \cite{KoSa3}, \cite{EdKoMe3}, %\cite{KoMeMorreyNew}, \cite{SaFCAANew}, etc.. The same problem for metric measure spaces was studied in \cite{EdKoMe1}, \cite{HaHaPe}, \cite{HHL} %(New), \cite{FHH...}(new), \cite{KoSa3}, \cite{KoSa4},  \cite{KoMeArmJM} etc..  For weighted estimates for these operators in variable exponent %Lebesgue spaces we refer to \cite{KoSa1}-\cite{KoSa5}, \cite{EdKoMeJFSA}, {\cite{EdMeMZ}, \cite{EdKoMeHJM}, \cite{KoMeITSF},\cite{Di3}, \cite{MubSa}, %\cite{KoMeProc}, \cite{KoMeProc2}, \cite{AshKoMeMIA}, etc.

Let $M$ be a maximal operator on $X$ given by
$$ Mf(x):= \sup_{x\in X,r>0} \frac{1}{\mu(B(x,r))}\int\limits_{B(x,r)}|f(y)| d\mu(y). $$

%Definition 1.8
\begin{definition} Let $(X, d, \mu)$ be a quasimetric measure space. We say that $p\in {\mathcal{M}}(X)$ if the operator $M$ is bounded in $L^{p(\cdot)}(X)$.
\end{definition}

%{\bf Definition 1.4.} Let $1<p_-\leq p_+<\infty$. We say that $p\in {\Bbb{M}}(X)$ if the maximal operator $M$ is bounded in $L^{p(\cdot)}(X)$.
%\vskip+0.1cm

L. Diening \cite{Di1} proved that if $\Omega$ is a bounded domain
in ${\Bbb{R}}^n$, $1< p_-\leq p_+<\infty$ and $p$ satisfies the
local log-H\"older continuity  condition on $\Omega$ (i.e., $
|p(x)-p(y)|\leq\frac{c}{-\ln (|x-y|)}$ for all $x,y\in \Omega$
with $|x-y|\leq 1/2$), then the Hardy--Littlewood maximal operator
defined on $\Omega$ is bounded in $L^{p(\cdot)}(\Omega)$.
\vskip+0.1cm

%{\bf Theorem A (\cite{HaHaPe}, \cite{KoMe3})}: {\em Let $(X, d,\mu)$ be an \hbox{SHT} with $L<\infty$. If $1<p_-\leq p_+<\infty$ and $p\in \overline{LH}(X)$, then $p\in {\mathcal{M}}(X)$.}
%\vskip+0.1cm

%\vskip+0.1cm
%Since extrapolation theorem for variable exponent Lebesgue spaces is true also for quasimetric measure spaces (see \cite{CRFiMaPe}, \cite {KoSa6}), we %have the next two statements:
%\vskip+0.1cm

%We shall also need the following regarding non-homogeneous spaces.

%\vskip+0.1cm

%For one-weight inequalities it is known the following statement (see \cite{KoSa3}, \cite{KoSa5}).
%\vskip+0.1cm

%{\bf Theorem B.} {\em Let $(X, d, \mu)$ be  a metric measure space with doubling condition and let  $L<\infty$. Suppose that $1<p_-\leq p_+<\infty$ %and $p\in {\overline{LH}}(X)$. If the weight function $\rho$ on $X$ satisfies the condition
%$$ \sup_{B} \bigg( \frac{1} {\mu(B)} \int_B \rho^{p(x)}d\mu(x)\bigg)^{1/p_-} \bigg( \frac{1}{\mu(B)} \int_B \rho^{p(x)(1-(p_-)'} %d\mu(x)\bigg)^{1/(p_-)'}<\infty,$$
%then there is a positive constant $C$ such that
%$$ \| \rho Mf \|_{L^{p(\cdot)}(X)} \leq C \| \rho f \|_{L^{p(\cdot)}(X)} $$
%holds.}

%\vskip+0.1cm
%{\bf Theorem F.}

Now we prove the following lemma:
%Lemma 1.9
\begin{lemma} Let $(X, d, \mu)$ be an $SHT$.   Suppose that $0<p_-\leq p_+<\infty$.
Then $p$ satisfies the condition $p\in {\mathcal{P}}(1)$
$($resp.  $p\in {\mathcal{P}}(1,x)$) if and only if  $p\in LH(X)$
$($ resp. $p\in LH(X,x)\;)$.
\end{lemma}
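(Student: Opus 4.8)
The plan is to establish the two implications separately. In both directions the essential mechanism is the same: take logarithms in the defining inequalities (2) (with $N=1$) and (3), use $p_-(B)\le p_+(B)$ to read (2) as a two-sided oscillation estimate of the form $\bigl(p_+(B(x,r))-p_-(B(x,r))\bigr)\bigl(-\ln\mu(B(x,r))\bigr)\le\mathrm{const}$, and then exploit the doubling property of $\mu$ to pass between the ball $B_{xy}=B(x,d(x,y))$ occurring in the $LH$--condition and a ball $B(x,r)$ with $r\approx d(x,y)$ occurring in the $\mathcal{P}(1)$--condition. Throughout one works with radii small enough that $\mu(B(x,r))<1$, so that $-\ln\mu(B(x,r))>0$; since $\mu$ is doubling and $\mu\{x\}=0$, such radii exist, and (using the reverse doubling property that an $SHT$ satisfying (1) enjoys) the threshold may be taken independent of $x$ once $\mu(X)<\infty$, in particular when $L<\infty$.

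\emph{From $\mathcal{P}(1)$ to $LH(X)$.} Fix $x\ne y$ with $d(x,y)$ small, set $r:=2d(x,y)$ and $B:=B(x,r)$, so that $x,y\in B$ and hence $p_-(B)\le p(x),p(y)\le p_+(B)$. Taking logarithms in (2), and using that $p_-(B)-p_+(B)\le 0$ while $\ln\mu(B)<0$, gives $\bigl(p_+(B)-p_-(B)\bigr)\bigl(-\ln\mu(B)\bigr)\le\ln c$, so that
\[ |p(x)-p(y)|\le p_+(B)-p_-(B)\le\frac{\ln c}{-\ln\mu(B)}. \]
By doubling, $\mu(B_{xy})\le\mu(B)=\mu\bigl(B(x,2d(x,y))\bigr)\le C_\mu\,\mu(B_{xy})$, whence $-\ln\mu(B)\ge -\ln\mu(B_{xy})-\ln C_\mu$; if $d(x,y)$ is small enough that $\mu(B_{xy})\le C_\mu^{-2}$, then $-\ln\mu(B_{xy})\ge 2\ln C_\mu$ and therefore $-\ln\mu(B)\ge\frac12\bigl(-\ln\mu(B_{xy})\bigr)$. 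Combining the two estimates yields $|p(x)-p(y)|\le(2\ln c)/\bigl(-\ln\mu(B_{xy})\bigr)$, i.e.\ (3) holds.

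\emph{From $LH(X)$ to $\mathcal{P}(1)$.} Fix a ball $B=B(x,r)$ with $r$ small. For $y,z\in B$ one has $d(x,y),d(x,z)<r$, hence $B_{xy},B_{xz}\subset B$ and so $-\ln\mu(B_{xy}),-\ln\mu(B_{xz})\ge -\ln\mu(B)>0$; by (3) and the triangle inequality, $|p(y)-p(z)|\le c/(-\ln\mu(B_{xy}))+c/(-\ln\mu(B_{xz}))\le 2c/(-\ln\mu(B))$. Passing to the supremum over $y,z\in B$ gives $\bigl(p_+(B)-p_-(B)\bigr)\bigl(-\ln\mu(B)\bigr)\le 2c$, and exponentiating,
\[ \mu(B)^{\,p_-(B)-p_+(B)}=\exp\!\Bigl(\bigl(p_+(B)-p_-(B)\bigr)\bigl(-\ln\mu(B)\bigr)\Bigr)\le e^{2c}, \]
which is (2) with $N=1$. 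The pointwise equivalence $p\in\mathcal{P}(1,x)\Leftrightarrow p\in LH(X,x)$ follows from the very same computation, now with all constants — and the smallness threshold — permitted to depend on $x$, so no uniformity question arises there.

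\emph{Main obstacle.} The only genuinely delicate point is this uniformity in the global statement: to run the argument for all $x$ at once one needs a single $b>0$ with $\mu(B(x,r))<1$ for every $x$ and every $r\le b$, so that $-\ln\mu(B(x,r))>0$ and, after the doubling step, stays comparable to $-\ln\mu(B_{xy})$. This is exactly where the structural assumptions on the $SHT$ enter: $\mu\{x\}=0$ together with the doubling and reverse doubling (the latter a consequence of (1)) properties forces $\sup_{x}\mu(B(x,r))\to 0$ as $r\to 0$ once the total mass is finite (e.g.\ when $L<\infty$). Everything else is the elementary logarithmic bookkeeping displayed above.
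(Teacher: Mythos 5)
Your argument is correct and follows essentially the same route as the paper's proof: for necessity you place $x,y$ in the concentric ball $B(x,2d(x,y))$, apply (2), and use doubling to pass from $\mu(B)$ back to $\mu(B_{xy})$; for sufficiency you bound the oscillation $p_+(B)-p_-(B)$ via the triangle inequality through the centre and exponentiate. The only (harmless) deviations are that you centre the test ball at $x$ itself, so the inclusion $B_{xy}\subset B(x,r)$ replaces the paper's doubling step there, and that you explicitly flag the need for $\mu(B(x,r))<1$ uniformly for small $r$ — a point the paper leaves implicit.
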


\begin{proof} {\em Necessity.}  Let $p\in {\mathcal{P}}(1)$  and let  $x, y\in X$ with $d(x,y)< c_0$ for some positive constant $c_0$. Observe that $x,y\in B$, where $B:=B(x, 2d(x,y))$. By the doubling condition for $\mu$ we
have that $ \big(\mu B_{xy}\big)^{-|p(x)- p(y)|} \leq c \big( \mu
B \big)^{-|p(x)- p(y)|}  \leq c \big( \mu B \big)^{p_-(B) -
p_+(B)} \leq C, $ where $C$ is a positive constant which is
greater than $1$. Taking now the logarithm in the last inequality
we have that $p\in LH(X)$. If $p\in {\mathcal{P}}(1,x)$, then by
the same arguments we find that  $p\in LH(X,x)$.

{\em Sufficiency.} Let $B:=B(x_0,r)$. First observe that If $x,y
\in B$, then  $\mu B_{xy} \leq c \mu B(x_0,r)$. Consequently, this
inequality and the condition $p\in LH(X)$ yield $ |p_-(B)- p_+
(B)| \leq \frac{C}{- \ln \big(c_0 \mu B(x_0, r)\big)}$. Further,
there exists $r_0$ such that $0<r_0<1/2$ and $ c_1 \leq \frac{ \ln
\big(\mu (B)\big)}{-\ln \big(c_0 \ln \mu(B)\big)} \leq c_2$, $0< r
\leq r_0 $, where $c_1$ and $c_2$ are positive constants. Hence $
\big(\mu(B)\big)^{p_-(B)- p_+(B)}  \leq \Big( \mu(B) \Big)^{
\frac{C}{\ln \big( c_0\mu (B)\big)} } = \exp\bigg( \frac{C \ln
\big( \mu(B)\big) }{\ln \big(c_0 \mu(B)\big)}\bigg) \leq C$.

Let now $p\in LH(X,x)$ and let $B_x:=B(x,r)$  where $r$  is a
small number. We have that $ p_+(B_x)- p(x) \leq \frac{c}{- \ln
\big(c_0 \mu B(x, r)\big)}$ and $ p(x)-p_-(B_x) \leq \frac{c}{-
\ln \big(c_0 \mu B(x, r)\big)}$ for some positive constant $c_0$.
Consequently, $ (\mu (B_x) )^{p_-(B_x)- p_+(B_x)}  = \big(
\mu(B_x)  \big)^{p(x)- p_+(B_x)} \big( \mu(B_x) \big)^{p_-(B_x)-
p(x)}\leq c \big( \mu(B_x) \big)^{\frac{-2c}{-\ln(c_0\mu
B_x))}}\leq C$.
\end{proof}

\begin{definition} A measure $\mu$  on $X$ is said to satisfy the reverse   doubling condition $( \mu \in \hbox{RDC}(X))$ if there exist  constants $A>1$ and $B>1$  such that the inequality $    \mu\big(B(a,Ar)\big)\geq  B\mu\big(B(a,r) \big)$ holds.
\end{definition}
{\em Remark} 1.3. It is known that if all annulus in $X$ are not empty, then $\mu \in \hbox{DC}(X)$ implies that $\mu \in \hbox{RDC}(X)$ (see, e.g., \cite{StTo}).
\vskip+0.1cm

In the sequel we will use the notation:

\begin{gather*}
 I_{1,k} := \begin{cases} B(x_0,A^{k-1}L / a_1)\; \hbox{if} \;\; L<\infty  \\
B(x_0,A^{k-1}/ a_1)\;\; \hbox{if}\;\; L = \infty \end{cases},   \\
 I_{2,k} := \begin{cases} \overline{B}(x_0,A^{k+2}a_1L)\setminus B(x_0, A^{k-1}L/ a_1),\;\; \hbox{if} \;\;L<\infty \\
\overline{B}(x_0,A^{k+2}a_1 )\setminus B(x_0, A^{k-1}/ a_1)\;
\hbox{if}\;\; L=\infty, \end{cases},\\
 I_{3,k} := \begin{cases} X\setminus B(x_0,A^{k+2}L  a_1)\;\; \hbox{if} \;\; L<\infty  \\
 X\setminus B(x_0,A^{k+2} a_1)\;\; \hbox{if}\; L = \infty \end{cases},
 \\
 E_k := \begin{cases} \overline{B}(x_0,A^{k+1}L) \setminus B(x_0,A^{k}L) \;\; \hbox{if}\;\; L<\infty \\
\overline{B}(x_0,A^{k+1}) \setminus B(x_0,A^{k}) \;\; \hbox{if}\;\; L=\infty \end{cases},
\end{gather*}
where the constant $A$ is defined in the reverse doubling condition and the constant $a_1$ is taken from the triangle inequality for the quasimetric $d$.

%Lemma 1.11
\begin{lemma} Let $(X, d, \mu)$ be an $\hbox{SHT}$.  Suppose
that there is a point $x_0\in X$ such that $p\in  LH(X,x_0)$. Then
there exist   positive constants  $r_0$ and $C$ $($ which might be
depended on $x_0\;)$ such that for all $r$, $0<r\leq r_0$, the
inequity
$$( \mu B_{A})^{p_{-}(B_{A})-p_{+}(B_{A})}\leq C$$
holds,  where $B_{A}: = B(x_0, Ar) \setminus B (x_0, r)$ and
the constant $C$ is independent of $r$ and the constant $A$ is defined in Definition 1.10.
\end{lemma}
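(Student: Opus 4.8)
The plan is to deduce the estimate for the annulus $B_A = B(x_0,Ar)\setminus B(x_0,r)$ from the pointwise log-H\"older condition $p\in LH(X,x_0)$ at the center $x_0$, exactly as in the sufficiency part of Lemma 1.8, the only new feature being that the set in question is an annulus rather than a ball. First I would fix a small $r_0$ (to be determined) and let $0<r\le r_0$, writing $B:=B_A$. The key observation is that every point $y\in B_A$ satisfies $r\le d(x_0,y)<Ar$, so $B_{x_0y}=B(x_0,d(x_0,y))$ is squeezed between $B(x_0,r)$ and $B(x_0,Ar)$; by the doubling condition $\mu B(x_0,Ar)\le c\,\mu B(x_0,r)$, hence
$$ \mu B(x_0,r)\le \mu B_{x_0y}\le \mu B(x_0,Ar)\le c\,\mu B(x_0,r) $$
for all $y\in B_A$, i.e. all these measures are comparable to $\mu B(x_0,r)$ with constants independent of $r$ and $y$.

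Next I would extract the oscillation bound. Since $L<\infty$ (because $X$ is an SHT with $\mu(X)<\infty$, or in any case we only need small $r$), for $r$ small enough we have $d(x_0,y)\le b$ for all $y\in B_A$, so the condition $p\in LH(X,x_0)$ gives $|p(x_0)-p(y)|\le c/(-\ln(\mu B_{x_0y}))$. Using the comparability above, $-\ln(\mu B_{x_0y}) \ge -\ln(c\,\mu B(x_0,r)) = -\ln(\mu B(x_0,r)) - \ln c$, and for $r$ small this is positive and comparable to $-\ln(\mu B(x_0,r))$. Therefore
$$ p_+(B_A)-p(x_0)\le \frac{C}{-\ln\bigl(c_0\,\mu B(x_0,r)\bigr)},\qquad p(x_0)-p_-(B_A)\le \frac{C}{-\ln\bigl(c_0\,\mu B(x_0,r)\bigr)}, $$
for a suitable $c_0$, hence $p_+(B_A)-p_-(B_A)\le 2C/\bigl(-\ln(c_0\,\mu B(x_0,r))\bigr)$.

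Finally I would raise $\mu B_A$ to this exponent. Since $\mu B_A\le \mu B(x_0,Ar)\le c\,\mu B(x_0,r)\le 1$ for $r$ small, and $p_-(B_A)-p_+(B_A)<0$, we get
$$ (\mu B_A)^{p_-(B_A)-p_+(B_A)} \le \bigl(\mu B(x_0,r)\bigr)^{-2C/(-\ln(c_0\,\mu B(x_0,r)))} = \exp\!\left( \frac{2C\,\ln(\mu B(x_0,r))}{\ln(c_0\,\mu B(x_0,r))}\right), $$
and, choosing $r_0$ small so that the ratio $\ln(\mu B(x_0,r))/\ln(c_0\,\mu B(x_0,r))$ stays bounded (it tends to $1$ as $r\to 0$ since $\mu\{x_0\}=0$ forces $\mu B(x_0,r)\to 0$), the right-hand side is bounded by a constant $C$ independent of $r$. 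This gives the claim. The only mildly delicate point—the ``main obstacle''—is bookkeeping the thresholds: one must choose $r_0$ small enough simultaneously so that $d(x_0,y)\le b$ on the annulus, so that $\mu B(x_0,Ar)\le 1$, and so that $-\ln(c_0\,\mu B(x_0,r))>0$ and the logarithmic ratio is controlled; all of this is routine given $\mu\{x_0\}=0$, the doubling property, and $L<\infty$, so no genuine difficulty arises beyond adapting the argument of Lemma 1.8 from balls to annuli.
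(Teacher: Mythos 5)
Your oscillation estimate on the annulus is correct: for $y\in B_A$ one has $B(x_0,r)\subseteq B_{x_0y}\subseteq B(x_0,Ar)$, doubling gives $\mu B_{x_0y}\approx \mu B(x_0,r)$ with constants independent of $r$ and $y$, and $p\in LH(X,x_0)$ then yields $p_+(B_A)-p_-(B_A)\le C/\bigl(-\ln(c_0\,\mu B(x_0,r))\bigr)$ for small $r$. The gap is in the final step. You majorize $(\mu B_A)^{p_-(B_A)-p_+(B_A)}$ by $\bigl(\mu B(x_0,r)\bigr)^{-2C/(-\ln(c_0\mu B(x_0,r)))}$ using only the \emph{upper} bound $\mu B_A\le c\,\mu B(x_0,r)$. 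Since the exponent is negative, an upper bound on $\mu B_A$ gives a \emph{lower} bound on the power, not an upper one: if $\mu B_A$ were much smaller than $\mu B(x_0,r)$ (or zero), the left-hand side would be arbitrarily large while your claimed majorant stays bounded. What is needed, and what the paper establishes as the very first line of its proof, is the lower bound $\mu B_A=\mu B(x_0,Ar)-\mu B(x_0,r)\ge (B-1)\,\mu B(x_0,r)$ coming from the reverse doubling condition ($B>1$ the reverse-doubling constant). Reverse doubling holds here because of the standing assumption (1) that annuli are non-empty together with doubling (Remark 1.3); it does not follow from $\mu\{x_0\}=0$, doubling and $L<\infty$ alone, so this is not routine bookkeeping but the one substantive point that distinguishes the annulus case from the ball case of Lemma 1.9.

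Once that lower bound is inserted, your computation closes: $(\mu B_A)^{p_-(B_A)-p_+(B_A)}\le \bigl((B-1)\mu B(x_0,r)\bigr)^{-2C/(-\ln(c_0\mu B(x_0,r)))}$, and the extra factor $(B-1)^{-\varepsilon}$ is harmless because the exponent $\varepsilon=2C/(-\ln(c_0\mu B(x_0,r)))$ is bounded (indeed tends to $0$). For comparison, the paper argues the same way in substance but more economically: after the reverse-doubling step $\mu B_A\ge c\,\mu(AB)$ with $AB=B(x_0,Ar)$, it uses $B_A\subset AB$ to replace the exponent $p_-(B_A)-p_+(B_A)$ by the smaller $p_-(AB)-p_+(AB)$, and then quotes Lemma 1.9 (the equivalence $LH(X,x_0)\Leftrightarrow{\mathcal P}(1,x_0)$) for the ball $AB$ rather than redoing the exponential estimate.
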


\begin{proof} Let $B:= B(x_0,r)$. First observe that by the doubling and reverse doubling conditions we have that
$ \mu B_{A} = \mu B(x_0,Ar)-\mu B(x_0,r) \geq (B-1)\mu B(x_0,r) \geq c \mu (AB)$.  Suppose that $0<r<c_{0}$, where $c_0$ is a sufficiently small constant. Then by using Lemma 1.9 we find that $   \big(\mu B_{A}\big)^{p_{-}(B_{A})-p_{+}(B_{A})} \leq c \big( \mu( AB ) \big)^{p_{-}(B_{A})-p_{+}(B_{A})}\leq  c\big( \mu( AB)\big)^{p_{-}(AB)-p_{+}(AB)} \leq c. $
\end{proof}

%{\bf Proposition 1.3 (see, e.g., \cite{KoRa}, \cite{Sa1}).} {\em Let $(X, d, \mu)$ be a quasimetric finite measure space and let $1\leq r(x)\leq %p(x)$. Then there exist a positive constant $c$ depending on $r$ and $p$ such that
%$$    \|f\|_{L^{r(\cdot)}(X)}\leq (1+\mu(X)) \|f\|_{L^{p(\cdot)}(X)} $$}
%\vskip+0.1cm
%The next statement for Euclidean spaces was proved in \cite{Di1} (Lemma 2.2). For an $SHT$ follows in a similar manner; therefore we omit the proof.
%\vskip+0.1cm
%{\bf Proposition 1.4.} {\em Let $(X,d,\mu)$ be a quasimetric measure space and let $p$ and $q$ be bounded exponents on $\Omega$. Then
%$$ L^{q(\cdot)}(\Omega) \hookrightarrow L^{p(\cdot)}(\Omega)$$
%if $p(x)\leq q(x)$ almost everywhere on $X$ and there is a constant $0<K<1$ such that
%$$ \int_{X} K^{p(x)q(x)/ (|q(x)-p(x)|)} d\mu(x) <\infty. \eqno{(11)}$$.}
%\vskip+0.1cm

%Lemma 1.12
\begin{lemma} Let $(X,d,\mu)$ be an ${\hbox{SHT}}$ and let $
1<p_{-}(x)\leq p(x)\leq q(x)\leq q_{+}(X)<\infty.$  Suppose that
there is a point $x_0\in X$ such that $p,q\in LH(X,x_0)$.  Assume
that $p(x)\equiv p_{c}\equiv const $, $q(x)\equiv q_{c}\equiv
\hbox{const}$ outside some ball $B(x_{0},a)$ if $L=\infty$. Then
there exist a positive constant C such that
$$     \sum\limits_k\|f\chi_{I_{2,k}}\|_{L^{p(\cdot)}(X)}\|g \chi_{I_{2,k}}\|_{L^{q'(\cdot)}(X)}\leq C\|f\|_{L^{p(\cdot)}(X)}\|g\|_{L^{q'(\cdot)}(X)}
$$
for all $f\in L^{p(\cdot)}(X)$ and $g\in L^{q'(\cdot)}(X).$
\end{lemma}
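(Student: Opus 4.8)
The plan is to exploit the near-disjointness of the annular pieces $I_{2,k}$ together with the log-H\"older control of the exponents on these annuli, which is exactly what Lemmas 1.9, 1.11 (and Proposition 1.6) are designed to give. First I would observe that for each $k$ the set $I_{2,k}$ is contained in a fixed bounded number of the basic annuli $E_j$ (indeed, because $A^{k+2}a_1L / (A^{k-1}L/a_1) = A^3 a_1^2$ is a fixed constant, $I_{2,k}$ meets only boundedly many consecutive $E_j$'s), and conversely each $E_j$ is covered by boundedly many $I_{2,k}$'s. Hence the family $\{I_{2,k}\}_k$ has \emph{bounded overlap}: there is an integer $m$, independent of everything, such that every point of $X$ lies in at most $m$ of the sets $I_{2,k}$. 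This reduces the claimed inequality, up to the constant $m$, to the statement that $\sum_k a_k b_k \le C\,\|f\|_{L^{p(\cdot)}}\|g\|_{L^{q'(\cdot)}}$ where $a_k:=\|f\chi_{I_{2,k}}\|_{L^{p(\cdot)}(X)}$ and $b_k:=\|g\chi_{I_{2,k}}\|_{L^{q'(\cdot)}(X)}$.

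Next I would pass from norms to modulars on each annulus. The point of assuming $p,q\in LH(X,x_0)$ and (when $L=\infty$) that $p\equiv p_c$, $q\equiv q_c$ outside $B(x_0,a)$ is that on each $I_{2,k}$ the oscillation of $p$ (and of $q'$, using Proposition 1.6) is controlled: by Lemma 1.11 applied to the annulus $B(x_0,Ar)\setminus B(x_0,r)$ with $r\approx A^{k-1}L/a_1$, we get $(\mu I_{2,k})^{p_-(I_{2,k})-p_+(I_{2,k})}\le C$ uniformly in $k$, and similarly for the $E_k$'s. Combined with the elementary norm-modular inequalities quoted just before H\"older's inequality in the excerpt, this lets me estimate, for $k$ with $a_k\le 1$, $a_k\le S_p(f\chi_{I_{2,k}})^{1/p_+(I_{2,k})}$ and so on; more usefully, it gives $a_k \le C\big(S_p(f\chi_{I_{2,k}})^{1/p_+} + S_p(f\chi_{I_{2,k}})^{1/p_-}\big)$ with $p_\pm$ the global bounds, so that $a_k^{p_+}\lesssim S_p(f\chi_{I_{2,k}}) + (\text{something summable})$. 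For the region $L=\infty$ where $p\equiv p_c$, the pieces $I_{2,k}$ with large $|k|$ involve only the constant exponent and the estimate is the classical one: $\sum_k \|f\chi_{I_{2,k}}\|_{L^{p_c}}\|g\chi_{I_{2,k}}\|_{L^{q_c'}}$ is handled by H\"older for sums after bounded overlap, since $\sum_k \|f\chi_{I_{2,k}}\|_{p_c}^{p_c}\le m\|f\|_{p_c}^{p_c}$.

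Then I would assemble the two regimes. Split the sum over $k$ into the finitely many indices for which $I_{2,k}$ meets $B(x_0,a)$ (only boundedly many, for each such term just use H\"older's inequality in $L^{p(\cdot)}$ from the excerpt and the trivial bound $\|f\chi_{I_{2,k}}\|_{L^{p(\cdot)}(X)}\le\|f\|_{L^{p(\cdot)}(X)}$), and the tail where $p,q$ are constant, handled as above. When $L<\infty$ there is no tail and one works directly on all $k$ (finitely many, in fact, since $\mu(X)<\infty$ forces the $I_{2,k}$ to stabilize), using the uniform oscillation bounds from Lemma 1.11 to convert everything to modulars and then summing by bounded overlap. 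Throughout, the key reduction is: norms $\le$ $C\times$ (modular)$^{1/\text{const}}$ with \emph{uniform} constants, then $\sum_k x_k^{1/\alpha} y_k^{1/\beta}$ with $\sum x_k\le C$, $\sum y_k\le C$, $1/\alpha+1/\beta\le 1$ (here using $p(x)\le q(x)$, hence $q'(x)\le p'(x)$, to get the H\"older exponents to cooperate), which gives a bound by $C$ via H\"older for sequences.

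The main obstacle I anticipate is making the ``norm $\le$ modular'' passage genuinely uniform in $k$ \emph{and} correctly combining it with the sequence-H\"older step when the relevant exponents vary with $k$: the exponent to which one raises the modular on $I_{2,k}$ is $p_\pm(I_{2,k})$, which changes with $k$, so one cannot literally apply a single discrete H\"older inequality. The standard fix, which I would carry out, is to use Lemma 1.11 to replace $p_\pm(I_{2,k})$ by the \emph{global} constants $p_-,p_+$ at the cost of the uniformly bounded factor $(\mu I_{2,k})^{p_--p_+}$, after which all the discrete exponents are constants $p_-$ (or $p_+$) and $q_-'$ (or $q_+'$) and the sequence-H\"older inequality applies cleanly; one must check that $1/p_+ + 1/q_+' \le 1$ isn't what's needed but rather that after splitting $a_k b_k$ one can afford, say, $a_k^{p_-}$ summable against $b_k^{(q_-')'}$-type quantities, which follows from $p\le q$. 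Handling the borderline between $\{a_k\le 1\}$ and $\{a_k>1\}$ (and likewise for $b_k$) introduces a finite case analysis but no real difficulty, since in each case one of the two norm-modular inequalities from the excerpt applies.
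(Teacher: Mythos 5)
Your reduction to $\sum_k a_k b_k$ with $a_k=\|f\chi_{I_{2,k}}\|_{L^{p(\cdot)}(X)}$, $b_k=\|g\chi_{I_{2,k}}\|_{L^{q'(\cdot)}(X)}$, the bounded--overlap observation, and the treatment of the constant--exponent tail are all sound and consistent with what the paper does. The gap is precisely in the step you flag as the main obstacle, and the fix you propose does not close it. After normalizing $\|f\|_{L^{p(\cdot)}(X)}\le 1$, $\|g\|_{L^{q'(\cdot)}(X)}\le 1$ you get $a_k\le x_k^{1/p_+(I_{2,k})}$ and $b_k\le y_k^{1/(q_-(I_{2,k}))'}$ with $x_k=S_p(f\chi_{I_{2,k}})$, $y_k=S_{q'}(g\chi_{I_{2,k}})$. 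The discrete H\"older step needs the two exponent reciprocals to sum to \emph{at least} $1$ (not at most $1$, as you wrote: with infinitely many terms, reciprocals summing to strictly less than $1$ give no bound), i.e.\ it needs $p_+(I_{2,k})\le q_-(I_{2,k})$. The pointwise hypothesis $p\le q$ only yields $p_-(I_{2,k})\le q_-(I_{2,k})$, leaving a defect of size $\varepsilon_k\approx p_+(I_{2,k})-p_-(I_{2,k})\le C/(-\ln\mu(I_{2,k}))$. Absorbing that defect costs a factor of the form $x_k^{-c\varepsilon_k}$ (or $y_k^{-c\varepsilon_k}$): a negative power of the \emph{modular}, not of $\mu(I_{2,k})$. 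Lemma 1.11 controls $\mu(I_{2,k})^{-c\varepsilon_k}$, but $S_p(f\chi_{I_{2,k}})$ can be arbitrarily small compared with every power of $\mu(I_{2,k})$ (take $f$ supported on a tiny subset of the annulus), so the factor you call ``uniformly bounded'' is not. Passing instead to the global constants $p_\pm$ makes matters strictly worse: the only free replacement leads to the requirement $p_+\le q_-$ \emph{globally}, which the hypotheses do not provide.

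This is exactly the difficulty the paper's argument is built to avoid: it never re-exponentiates the modular of $f$. Instead it rewrites $\sum_k a_kb_k$, up to constants, as $\int_X \sum_k \big(a_kb_k/(\|\chi_{I_{2,k}}\|_{L^{q(\cdot)}(X)}\|\chi_{I_{2,k}}\|_{L^{q'(\cdot)}(X)})\big)\chi_{E_k}(x)\,d\mu(x)$ — legitimate because $\|\chi_{I_{2,k}}\|_{L^{q(\cdot)}(X)}\|\chi_{I_{2,k}}\|_{L^{q'(\cdot)}(X)}\approx\mu(I_{2,k})\approx\mu(E_k)$ and the $E_k$ are essentially disjoint — then applies the variable--exponent H\"older inequality on $X$ to split off $S_1(f)\cdot S_2(g)$, and finally estimates each factor by modular computations (via the auxiliary exponent ${\mathbb P}$) in which the exponent swaps of Lemma 1.11 are applied only to quantities comparable to $\mu(I_{2,k})$, where they are harmless. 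Your scheme could likely be repaired by splitting the indices according to whether $x_k\ge\mu(I_{2,k})$ (on the complementary set one bounds $x_k^{1-\varepsilon_k}$ by $c\,\mu(I_{2,k})$ and uses that the relevant $I_{2,k}$ have summable measure), but as written the central uniformity claim is false and the proof does not go through.
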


\begin{proof} Suppose that $L =\infty$. To prove the lemma  first observe that
$  \mu(E_k)\approx \mu B(x_0,A^k )$ and  $\mu(I_{2,k})\approx\mu B(x_0,A^{k-1})$. This holds  because $\mu$ satisfies the reverse doubling condition and, consequently,
\begin{gather*}
 \mu E_k =  \mu \Big(\overline{B}(x_0,A^{k+1})\setminus B(x_0,A^k )\Big) =\mu \overline{B}(x_0,A^{k+1}) - \mu B{(x_0,A^{k})} \\
= \mu \overline{B}(x_0,A A^{k}) - \mu B(x_0,A^{k}) \geq B\mu B(x_0, A^{k}) - \mu B(x_0,A^{k}) = (B-1)\mu B(x_0, A^{k})
\end{gather*}
Moreover,  using the doubling condition we have $ \mu E_{k}\leq \mu B(x_0, AA^{k})\mathop \leq c \mu B(x_0, A^{k}), $
where $c>1$. Hence, $ \mu E_k\approx \mu B(x_0,A^{k}).$

%$$  (B-1)\mu B(x_0,A^{k})\leq \mu E_k\leq c \mu B(x_0,A^{k}) $$
%which implies

Further,  since we can assume that $a_1\geq 1$, we find that
 \begin{gather*}
 \mu I_{2,k} = \mu \Big(\overline{B}(x_{0},A^{k+2}a_1)\setminus B(x_{0},A^{k-1}/a_1)\Big) =\mu \overline{B}(x_{0},A^{k+2}a_1)-\mu B(x_{0},A^{k-1}/a_1) \\
 = \mu \overline{B}(x_{0},A A^{k+1} a_1)- \mu B(x_{0},A^{k-1}/a_1) \geq  B \mu B(x_{0}, A^{k+1} a_1)-\mu B(x_{0},A^{k-1}/a_1) \\
 \geq  B^{2}\mu B(x_{0},A^{k}/a_1)-\mu B(x_{0},A^{k-1}/a_1)  \geq B^{3}\mu B(x_{0},A^{k-1}/a_1)-\mu B(x_{0},A^{k-1}/a_1) \\
=(B^{3}-1)\mu B(x_0,A^{k-1}/a_1).
 \end{gather*}
Moreover, using the doubling condition we have $ \mu I_{2,k} \leq \mu\overline{ B}(x_{0},A^{k+2}r) \leq c \mu B(x_{0},A^{k+1}r) \leq  c^{2}\mu B(x_{0},A^{k}/a_1) \leq  c^{3}\mu B(x_{0},A^{k-1}/a_1) $. This gives the estimates $ (B^{3}-1)\mu B(x_{0},A^{k-1}/a_1) \leq  \mu (I_{2,k}) \leq c^{3}\mu B(x_{0},A^{k-1}/a_1).$

For simplicity assume that $a=1$. Suppose that $m_0$ is an integer such that $\frac{A^{m_0-1}}{a_1}>1$. Let us split the sum as follows:

$$ \sum\limits_{i}\|f \chi_{I_{2,i}}\|_{L^{p(\cdot)}(X)}\cdot \|g \chi_{I_{2,i}}\|_{L^{q'(\cdot)}(X)}=\sum\limits_{i\leq m_0}\Big(\cdots \Big)+\sum\limits_{i> m_0}\Big(\cdots\Big)=:J_{1}+J_{2}. $$

Since $p(x)\equiv p_{c}=const,\ q(x)=q_{c}=const$ outside the  ball $B(x_0,1)$, by using H\"{o}lder's inequality and the fact that $p_{c}\leq q_{c}$, we have
$$ J_{2} = \sum\limits_{i>m_0}\|f \chi_{I_{2,i}}\|_{L^{p_c}(X)}\cdot \|g \chi_{I_{2,i}}\|_{L^{(q_c)'}(X)}
        \leq c\|f\|_{L^{p(\cdot)}(X)}\cdot \|g\|_{L^{q'(\cdot)}(X)}. $$

Let us estimate $J_{1}$. Suppose that $
\|f\|_{L^{p(\cdot)}(X)}\leq 1$ and $\|g \|_{L^{q'(\cdot)}(X)} \leq
1$. Also, by Proposition 1.4 we have that $1/q' \in LH(X,x_0)$.
Therefore by Lemma 1.11 and the fact that $1/q'\in LH(X,x_0)$ we
obtain that $\mu \big( I_{2,k}\big)^{\frac{1}{q_{+}(I_{2,k})}}
\approx \|\chi_{I_{2,k}}\|_{L^{q(\cdot)}(X)}\approx \mu
\big(I_{2,k}\big)^{\frac{1}{q_{-}(I_{2,k})}}$ and  $\mu
\big(I_{2,k}\big)^{\frac{1}{q_{+}'(I_{2,k})}} \approx
\|\chi_{I_{2,k}}\|_{L^{q'(\cdot)}(X)}\approx  \mu \big(
I_{2,k}\big)^{\frac{1}{q'_{-}(I_{k})}} $, where $k \leq m_0$.
Further, observe that these estimates and H\"{o}lder's inequality
yield the following chain of inequalities:
\begin{gather*}
J_{1} \leq c \sum\limits_{k\leq m_0}\;\; \int\limits_{\overline{B}(x_{0},A^{m_0+1})}\frac{\|f \chi_{I_{2,k}}\|_{L^{p(\cdot)}(X)}\cdot \|g \chi_{I_{2,k}}\|_{L^{q'(\cdot)}(X)}}{\|\chi_{I_{2,k}}\|_{L^{q(\cdot)}(X)}\cdot \|\chi_{I_{2,k}}\|_{L^{q'(\cdot)}(X)}}\chi_{E_{k}}(x)d\mu(x) \\
   =c \int\limits_{\overline{B}(x_{0},A^{m_0+1})}\sum\limits_{k\leq m_0}\frac{\|f \chi_{I_{2,k}}\|_{L^{p(\cdot)}(X)}\cdot \|g \chi_{I_{2,k}}\|_{L^{q'(\cdot)}(X)}}{\|\chi_{I_{2,k}}\|_{L^{q(\cdot)}(X)}\cdot \|\chi_{I_{2,k}}\|_{L^{q'(\cdot)}(X)}}\chi_{E_{k}}(x)d\mu(x) \\
\leq  c\Big\|\sum\limits_{k\leq  m_0}\frac{\|f \chi_{I_{2,k}}\|_{L^{p(\cdot)}(X)}}{\|\chi_{I_{2,k}}\|_{L^{q(\cdot)}(X)}}\chi_{E_{k}}(x) \Big\|
_{L^{q(\cdot)}(\overline{B}(x_{0},A^{m_0+1}))} \\
\times\Big\|\sum\limits_{k\leq
m_0}\frac{\|g
\chi_{I_{2,k}}\|_{L^{q'(\cdot)}(X)}}{\|\chi_{I_{2,k}}\|_{L^{q'(\cdot)}(X)}}
\chi_{E_{k}}(x)\Big\|_{L^{q'(\cdot)}(\overline{B}(x_{0},A^{m_0+1}))} =: c
S_{1}(f)\cdot S_{2}(g).
\end{gather*}

Now we claim that $  S_1(f) \leq c I(f) $, where
\begin{gather*}
I(f):=\Big\|\sum\limits_{k\leq m_0}\frac{\|f\chi_{I_{2,k}}\|_{L^{p(\cdot)}(X)}}{\|\chi_{I_{2k}}\|_{L^{p(\cdot)}(X)}}\chi_{E_{k}(\cdot)}\Big\|_{L^{p(\cdot)}(\overline{B}(x_0,A^{m_0+1}))}
\end{gather*}
and the positive constant $c$ does not depend on $f$. Indeed, suppose that $I(f)\leq 1$. Then  taking into account Lemma 1.11 we have that
\begin{gather*}
    \sum\limits_{k\leq m_0}\frac{1}{\mu (I_{2,k})}\int_{E_{k}}\|f\chi_{I_{2,k}} \|_{L^{p(\cdot)}(X)}^{p(x)}d\mu(x)\\
    \leq c\int\limits_{\overline{B}(x_0,A^{m_0+1})}\Big(\sum\limits_{k\leq m_0}\frac{ \|f\chi_{I_{2,k}}\|_{L^{p(\cdot)}(X)}}{\|\chi_{I_{2,k}}\|_{L^{p(\cdot)}(X)}}\chi_{E_{k}(x)}\Big)^{p(x)}d\mu(x) \leq c.
\end{gather*}
Consequently, since $p(x)\leq q(x)$, $E_k\subseteq I_{2,k}$ and $\|f\|_{L^{p(\cdot)}(X)}\leq 1$, we find that

$$   \sum\limits_{k\leq m_0}\!\frac{1}{\mu (I_{2,k})}\!\int\limits_{E_{k}}\!\!\|f\chi_{I_{2,k}}\|_{L^{p(\cdot)}(X)}^{q(x)}d\mu(x)\leq \sum\limits_{k\leq m_0}\frac{1}{\mu (I_{2,k})}\int\limits_{E_{k}}\!\!\|f\chi_{I_{2,k}}\|_{L^{p(\cdot)}(X)}^{p(x)}d\mu(x)\!\leq\! c. $$
This implies that $S_1(f)\leq c$. Thus  the desired inequality is proved. Further, let us introduce the following function:
$${\mathbb P}(y):=\sum\limits_{k\leq 2}p_+(\chi_{I_{2,k}})\chi_{E_{k}(y)}. $$

It is clear that $p(y)\leq{\mathbb P}(y)$  because $E_k \subset I_{2,k}.$ Hence
$$  I(f)\leq c \Big\|\sum\limits_{k\leq m_0}\frac{\|f\chi_{I_{2,k}}\|_{L^{p(\cdot)}(X)}}{\|\chi_{I_{2k}}\|_{L^{p(\cdot)}(X)}}\chi_{E_{k}(\cdot)}\Big\|_{L^{{\mathbb{P}}(\cdot)}(\overline{B}(x_0,A^{m_0+1}))}$$
for some positive constant $c$. Then by using the this inequality,
the definition of the function ${\mathbb {P}}$, the condition
$p\in LH(X)$ and the obvious estimate $
\|\chi_{I_{2,k}}\|_{L^{p(\cdot)}(X)}^{p_{+}(I_{2,k})}\geq c\mu
(I_{2,k})$, we find that
\begin{gather*} \int\limits_{\overline{B}(x_0,A^{m_0+1})}\!\!\!\!\!\!\bigg(\sum\limits_{k\leq m_0}\frac{\|f\chi_{I_{2,k}}\|_{L^{p(\cdot)}(X)}}{\|\chi_{I_{2,k}}\|_{L^{p(\cdot)}(X)}}\chi_{E_{k}(x)}\bigg)^{{\mathbb P}(x)}\!\!\!\!\!\!\!\!d\mu(x)\\
= \int\limits_{{\overline{B}}(x_0,A^{m_0+1})}\!\!\!\!\!\!\bigg(\sum \limits_{k\leq
m_0}\frac{\|f\chi_{I_{2,k}}\|_{L^{p(\cdot)}(X)}^{p_{+}(I_{2,k})}}{\|\chi_{I_{2,k}}\|_{L^{p(\cdot)}(X)}^{p_{+}(I_{2,k})}}\chi_{E_{k}(x)}\bigg)d\mu(x)\\
\leq c\int\limits_{\overline{B}(x_0,A^{m_0+1})}\bigg(\sum\limits_{k\leq m_0}\frac{\|f\chi_{I_{2,k}}\|_{L^{p(\cdot)}(X)}^{p_{+}(I_{2,k})}}{\mu (I_{2,k})}\chi_{E_{k}(x)}\bigg)d\mu(x)  \leq c\sum\limits_{k\leq
m_0}\|f\chi_{I_{2,k}}\|_{L^{p(\cdot)}(X)}^{p_{+}(I_{2,k})}\\
\leq c\sum\limits_{k\leq m_0}\int\limits_{I_{2,k}}|f(x)|^{p(x)}d\mu(x) \leq c \int\limits_{X}|f(x)|^{p(x)}d\mu(x)\leq c.
\end{gather*}

Consequently, $   I(f)\leq c \| f\|_{L^{p(\cdot)}(X)}$. Hence, $   S_1(f)\leq c \| f\|_{L^{p(\cdot)}(X)}$.   Analogously taking into account the fact that
$q'\in DL(X)$ and arguing as above we find that $ S_{2}(g) \leq c \| g\|_{L^{q'(\cdot)}(X)}$.
Thus summarizing these estimates  we conclude that
$$    \sum\limits_{i\leq m_0}\|f \chi_{I_{i}}\|_{L^{p(\cdot)}(X)}  \|g \chi_{I_{i}}\|_{L^{q'(\cdot)}(X)}\leq c \|f\|_{L^{p(\cdot)}(X)}\|g\|_{L^{q'(\cdot)}(X)}.  $$
\end{proof}

The next statement for metric measure spaces was proved in
\cite{HaHaPe} (see also \cite{KoMe3}, \cite{KoMe5} for  quasimetric
measure spaces).

\vskip+0.1cm
{\bf Theorem A.} {\em Let $(X, d,\mu)$ be an $SHT$ and let $\mu (X) < \infty$. Suppose that $1<p_-\leq
p_+<\infty$ and  $p\in {\mathcal{P}}(1)$. Then $M$ is bounded in
$L^{p(\cdot)}(X)$.}

For the following statement we refer to \cite{Kh}:
\vskip+0.1cm

{\bf Theorem B.}  {\em Let $(X, d, \mu)$ be an $SHT$ and let $L=\infty$. Suppose that $1<p_-\leq p_+<\infty$ and  $p\in {\mathcal{P}}(1)$.
Suppose also that $p=p_c= \hbox{const}$ outside some ball $B:= B(x_0,R)$. Then $M$ is bounded in $L^{p(\cdot)}(X)$.}

\section{Hardy--type transforms}

In this section we derive two-weight estimates for the operators:
$$   T_{v,w}f(x) = v(x)\int\limits _{B_{x_0 x}} f(y)w(y)d\mu(y)\;\; \hbox{and} \;\; T'_{v,w}f(x) = v(x)\int\limits _{X\backslash \overline{B}_{x_0 x}} f(y)w(y)d\mu(y). $$

%Recall that by $L$ we denote a diameter of $X$.

Let $a$ is a positive constant and let $p$ be a measurable
function defined on $X$. Let us introduce the notation:
$$
 p_{0}(x):=p_{-}(\overline{B}_{x_0x}); \;\;  \widetilde{p}_0 (x): = \left\{
  \begin{array}{ll}
    p_0 (x) & \hbox{if}\ \ d(x_0, x) \le a; \\
    p_c = \hbox{const} & \hbox{if}\ \   d(x_0, x) > a.
  \end{array}
\right. $$
$$
 p_{1}(x):=p_{-} \left(\overline{B}(x_0,a) \setminus B_{x_0x} \right); \,\, \widetilde{p}_1 (x) := \left\{
  \begin{array}{ll}
    p_1 (x) & \hbox{if}\ \ d(x_0, x) \le a; \\
    p_c = \hbox{const} & \hbox{if} \ \  d(x_0, x) > a.
  \end{array}
\right.
$$

\vskip+0.1cm {\em Remark} 2.1. If  we deal with a quasi-metric
measure space with $L<\infty$, then we will assume that $a=L$.
Obviously, $\widetilde{p}_0   \equiv p_0$ and  $\widetilde{p}_1
\equiv p_1$ in this case.

%Theorem 2.1
{\bf Theorem 2.1.} {\em Let $(X, d, \mu)$ be a quasi-metric
measure space  . Assume that $p$ and $q$ are measurable functions
on $X$ satisfying the condition $1<p_{-} \leq
\widetilde{p}_{0}(x)\leq q(x)\leq q_{+}<\infty. $  In the case
when $L=\infty$ suppose that $p\equiv p_c\equiv$ const, $q\equiv
q_c\equiv$ const,  outside some ball $\overline{B}(x_0,a)$. If the
condition
$$    A_{1}:= \sup\limits_{0\leq t\leq L} \int\limits_{t< d(x_{0},x)\leq L}\big(v(x)\big)^{q(x)}\bigg(\int\limits_{d(x_{0},x)\leq t}
    w^{(\widetilde{p}_{0})'(x)}(y)d\mu(y)\bigg)^{\frac{q(x)}{(\widetilde{p}_0)'(x)}}d\mu(x)<\infty,$$
 hold, then $T_{v,w}$ is bounded from $L^{p(\cdot)}(X)$ to  $ L^{q(\cdot)}(X)$.}
 \vskip+0.1cm

{\em Proof.} Here we use the arguments of the proofs of Theorem
1.1.4 in \cite{EdKoMe} (see p. 7) and of Theorem 2.1 in
\cite{EdKoMe1}. First we notice that $p_{-} \leq p_{0}(x) \leq
p(x)$  for all $ x \in X $. Let $f \geq 0$ and let $S_{p}(f)\leq
1$.  First assume that $L<\infty$. We denote
$$     I(s):= \int\limits_{d(x_{0},y)<s} f(y) w(y) d\mu(y) \ \ \hbox{for}\ s \in [0,L]. $$
Suppose that $I(L)<\infty$.  Then $I(L) \in (2^{m},2^{m+1}]$ for
some $ m \in \mathbb{Z}. $ Let us denote $   s_{j}:= \sup
\{s:I(s)\leq 2^{j}\}, \ j \leq m$, and  $s_{m+1}:=L. $ Then
$\big\{ s_{j}\big\}_{j=-\infty}^{m+1}$ is a non-decreasing
sequence. It is easy to check that $I(s_{j})\leq 2^{j},\ I(s)>
2^{j}$ for $s>s_{j}$, and $    2^{j}\leq \int\limits_{s_{j}\leq
d(x_{0},y)\leq s_{j+1}}f(y) w(y) d\mu (y) $. If $\beta:=
\lim\limits_{j\rightarrow -\infty}s_{j},$ then $ d(x_{0},x)<L$ if
and only if  $d(x_{0},x)\in [0,\beta]\cup \bigcup
\limits_{j=-\infty}^{m}(s_{j},s_{j+1}].  $ If $I(L)=\infty$ then
we take $m=\infty$. Since $  0 \leq I(\beta) \leq I(s_{j}) \leq
2^{j} $ for every  $j$, we have that $  I(\beta) = 0. $ It is
obvious that $ X =\bigcup \limits_{j\leq m}\{x: s_{j}< d(x_{0},x)
\leq s_{j+1}\}$. Further, we have that
\begin{eqnarray*}
  S_q(T_{v,w}f) = \int\limits_X (T_{v,w}f(x))^{q(x)} d\mu(x) = \int\limits_X \Bigg( v(x) \!\!\!\!\!\!\! \int\limits_{B(x_0,\ d(x_0,x))}
f(y) w(y)d\mu(y) \Bigg)^{q(x)} \!\!\!\! d\mu(x)\\
 = \int\limits_X (v(x))^{q(x)} \Bigg( \int\limits_{B(x_0,\ d(x_0,x))}
f(y) w(y)d\mu(y) \Bigg)^{q(x)} d\mu(x) \\
 \leq  \sum\limits_{j=-\infty}^m \int\limits_{s_j < d(x_0,x)\leq s_{j+1}}\!\!\!\!\!\!\!\!\!\!\Big(v(x)\Big)^{q(x)}
\!\Bigg(\;\;\;\; \int\limits_{d(x_0,y)< s_{j+1}}
\!\!\!\!\!\!\!\!\! f(y) w(y)d\mu(y) \Bigg)^{q(x)} d\mu(x).
\end{eqnarray*}

Notice that $ I(s_{j+1}) \leq 2^{j+1} \leq 4
\int\limits_{s_{j-1}\leq d(x_{0},y)\leq s_{j}}w(y) f(y) d\mu(y) $.
Consequently, by this estimate  and H\"older's inequality with
respect to the exponent $p_0(x)$ we find that
\begin{eqnarray*}
     S_{q}\big(T_{v,w}f\big) \leq c \sum \limits_{j=-\infty}^{m}\int\limits_{s_{j}<d(x_{0},x)\leq s_{j+1}}\!\!\!\!\!\!\!\!\!\Big(v(x)\Big)^{q(x)}\Bigg(
    \int\limits_{s_{j-1}\leq d(x_{0},y)\leq s_{j}}\!\!\!\!\!\!\!\!\!f(y)w(y)d \mu (y)\Bigg)^{q(x)}d\mu(x) \\
      \leq c\sum\limits_{j=-\infty}^{m}\int\limits_{s_{j}<d(x_{0},x)\leq s_{j+1}}\!\!\!\!\!\!\!\!\!\big(v(x)\big)^{q(x)}J_{k}(x)d\mu(x),
 \end{eqnarray*}
where
$$ J_{k}(x):=\bigg(\!\!\!\!\!\!\!\!\!\int\limits_{s_{j-1}\leq d(x_{0},y)\leq s_{j}}\;\;\!\!\!\!\!\!\!\!\!\!\!f(y)^{p_{0}(x)}d\mu(y)\bigg)^{\frac{q(x)}{p_{0}(x)}}       \bigg(\!\!\!\!\!\!\!\!\!\int\limits_{s_{j-1}\leq d(x_{0},y)\leq s_{j}}\!\!\!\!\!\!\!\!\!w(y)^{(p_{0})'(x)}d \mu (y)\bigg)^{\frac{q(x)}{(p_{0})'(x)}}. $$

Observe now that  $ q(x) \geq p_{0}(x)$. Hence, this fact and the
condition $S_{p}(f)\leq 1 $ imply that
\begin{eqnarray*}
J_{k}(x) \leq c \bigg(\int\limits_{\{y:s_{j-1}\leq d(x_{0},y)\leq s_{j}\}\cap \{y:f(y)\leq 1\}}\!\!\!\!\!\!\!\!\!\!\!\!\!\!\!\!\!\!\!f(y)^{p_{0}(x)}d\mu(y) +  \!\!\!\!\!\!\!\!\int\limits_{\{y:s_{j-1}\leq d(x_{0},y)\leq s_{j}\}\cap \{y:f(y)> 1\}}\!\!\!\!\!\!\!\!\!\!\!\!\!\!\!\!\!\!\!\!\!\!\!\!\!\! f(y)^{p(y)}d\mu(y)\bigg)^{\frac{q(x)}{p_{0}(x)}} \\
\times \bigg(\int\limits_{s_{j-1}\leq d(x_{0},y)\leq s_{j}}\!\!\!\!\!\!\!\!\!w(y)^{(p_{0})'(x)}d \mu (y)\bigg)^{\frac{q(x)}{(p_{0})'(x)}} \\
 \leq  c \bigg( \mu \big( \{y:s_{j-1}\leq d(x_{0},y)\leq s_{j}\}\big) +\int\limits_{\{y:s_{j-1}\leq d(x_{0},y)\leq s_{j}\}\cap \{y:f(y)> 1\}}\!\!\!\!\!\!\!\!\!\!\!\!\!\!\!f(y)^{p(y)}d\mu(y)\bigg)\\
\times \bigg(\int\limits_{s_{j-1}\leq d(x_{0},y)\leq
s_{j}}w(y)^{(p_{0})'(x)}d \mu
(y)\bigg)^{\frac{q(x)}{(p_{0})'(x)}}.
\end{eqnarray*}

It follows now  that

\begin{eqnarray*}
 S_q(T_{v,w}f)  \leq c \bigg(\sum\limits_{j=-\infty}^{m}\mu \big(\{y:s_{j-1}\leq d(x_{0},y)\leq s_{j}\}\big)\int\limits_{s_{j}<d(x_{0},x)\leq s_{j+1}}v(x)^{q(x)}\\
  \times \bigg( \int\limits_{s_{j-1}\leq d(x_{0},y)\leq s_{j}}w(y)^{(p'_{0})(x)}d\mu(y)\bigg)^{\frac{q(x)}{(p_{0})'(x)}} d\mu(x)\\
+ \sum\limits_{j=-\infty}^{m}\bigg(\int\limits_{y:\{s_{j-1}\leq d(x_{0},y)\leq s_{j}\}\cap\{y: f(y)>1\}}f(y)^{p(y)}d\mu(y)\bigg) \\
\times \!\!\!\!\! \int\limits_{s_{j}<d(x_{0},x)\leq
s_{j+1}}\!\!\!\!\!\!\!\!\! v(x)^{q(x)} \bigg(\!\!\!\!
\int\limits_{s_{j-1}\leq d(x_{0},y)\leq
s_{j}}w(y)^{(p_{0})'(x)}d\mu(y)\bigg)^{\frac{q(x)}{(p_{0})'(x)}}d\mu(x)
\bigg) :=c\big(N_{1}+N_{2}\big).
\end{eqnarray*}
 It is obvious that
 $$ N_{1} \leq A_{1}\sum\limits_{j=-\infty}^{m+1}\!\!\mu \big( \{y:s_{j-1}\leq d(x_{0},y)\leq s_{j}\}\big) \leq  C A_{1}  $$
and
$$ N_{2} \leq A_{1}\sum\limits_{j=-\infty}^{m+1}\int\limits_{\{y:s_{j-1}\leq d(x_{0},y)\leq s_{j}\}}\!\!\!\!\!\!\!\!\!\!\!\!\!\!\!f(y)^{p(y)}d\mu(y)= C \int\limits_{X}\big(f(y)\big)^{p(y)}d\mu(y)=A_{1}S_{p}(f) \leq A_{1}. $$

Finally $  S_q(T_{v,w}f) \leq c\big(c A_{1}+A_{1}\big)<\infty $.
Thus $T_{v,w}$ is bounded if $A_{1}< \infty$.

Let us now suppose that $L=\infty$.  We have
\begin{eqnarray*}
   T_{v,w}f(x) = \chi_{B(x_0,a)}(x) v(x)\int\limits_{B_{x_0x}} f(y)w(y)d\mu(y) \\
   +\chi_{X \backslash B(x_0,a)}(x)v(x)  \int\limits_{B_{x_0x}}f(y)w(y)d\mu(y)
=: T_{v,w}^{(1)}f(x)+T_{v,w}^{(2)}f(x)
\end{eqnarray*}

By using already proved result for $L<\infty$ and the fact  that
${\hbox{diam}}\; \big(B(x_0,a)\big)<\infty$  we find that $
\|T_{v,w}^{(1)}f\|_{L^{q(\cdot)}\big(B(x_0,a)\big)}\leq
c\|f\|_{L^{p(\cdot)}\big(B(x_0,a)\big)} \leq c$ because
$$  A_{1}^{(a)}:= \sup\limits_{0\leq t \leq a}\int\limits_{t< d(x_{0},x)\leq a}\!\!\!\!\!\!\big(v(x)\big)^{q(x)}\bigg(\int\limits_{d(x_{0},x)\leq t}
    w^{(p_{0})'(x)}(y)d\mu(y)\bigg)^{\frac{q(x)}{(p_{0})'(x)}}\!\!\!\!\!d\mu(x)  \leq A_1 <\infty. $$
Further,  observe that
\begin{eqnarray*}
T_{v,w}^{(2)}f(x)\!\!  =\!\!  \chi_{X\backslash B(x_{0},a)}(x)v(x)
\!\!\!\int\limits_{B_{x_0x}}\!\!\!f(y)w(y)d\mu(y) =
\chi_{X\backslash B(x_{0},a)} (x)v(x)
\!\!\!\!\!\!\!\! \int\limits_{d(x_{0},y)\leq a}  \!\!\!\!\! f(y)w(y)d\mu(y)\\
+\chi_{X\backslash B(x_{0},a)}(x)v(x) \!\!\!\!\!\!
\int\limits_{a\leq d(x_{0},y)\leq d(x_{0},x)}     \!\!\!\!\!\!
f(y)w(y)d\mu(y) =:  T^{(2,1)}_{v,w} f(x)+T^{(2,2)}_{v,w} f(x).
\end{eqnarray*}

It is easy to see that (see also Theorem 1.1.3 or 1.1.4 of
\cite{EdKoMe}) the condition
$$\overline A_{1}^{(a)}:=\sup\limits_{t\geq a} \bigg(\int\limits_{d(x_0,x)\geq t} \!\!\!\! \big(v(x)\big)^{q_{c}}d\mu(x)\bigg)^{\frac{1}{q_{c}}}\bigg(\!\!\!\int\limits_{a\leq d(x_0,y)\leq t}\!\!\!\!\!\!\!\! w(y)^{(p_{c})'}d\mu(y)\bigg)^{\frac{1}{(p_{c})'}} <\infty $$
guarantees the boundedness  of the operator
$$T_{v,w} f(x)= v(x)\!\!\!\!\!\!\! \int\limits_{a\leq d(x_0,y)< d(x_0,x)}\!\!\!\! f(y)w(y)d\mu(y)$$
from $L^{p_{c}}\big(X\backslash B(x_0,a)\big)$  to
$L^{q_{c}}\big(X\backslash B(x_0,a)\big).$ Thus  $T^{(2,2)}_{v,w}$
is  bounded. It remains to prove that $T^{(2,1)}_{v,w}$  is
bounded. We have
$$
 \| T^{(2,1)}_{v,w}f \|_{L^{p(\cdot)}(X)} = \Bigg(\int\limits_{\big(B(x_0,a)\big)^{c}} v(x)^{q_{c}} d\mu(x)\Bigg)^{\frac{1}{q_{c}}} \Bigg(\int\limits_{\overline B(x_0,a)}f(y)w(y)d\mu(y)\Bigg)$$
 $$
  \leq  \Bigg(\int\limits_{\big(B(x_0,a)\big)^{c}} v(x)^{q_{c}} d\mu(x)\Bigg)^{\frac{1}{q_{c}}} \|f\|_{L^{p(\cdot)}\big(\overline B(x_0,a)\big)}\|w\|_{L^{p'(\cdot)}\big(\overline B(x_0,a)\big)}.
 $$
Observe now that the condition $A_1<\infty$ guarantees that the
integral
$$ \int\limits_{\big(B(x_0,a)\big)^{c}} v(x)^{q_{c}} d\mu(x)$$
is   finite. Moreover,   $N:= \|w\|_{L^{p'(\cdot)}\big( \overline
B(x_0,a)\big)}<\infty$. Indeed, we have that
$$N \leq   \left\{   \begin{array}{ll}
    \bigg(\int\limits_{\overline B(x_0,a)}w(y)^{p'(y)}d\mu(y)\bigg)^{\frac{1}{\big(p_- (\overline B(x_0,a))\big)'}} & \hbox{if} \ \|w\|_{L^{p'(\cdot)}(\overline{B}(x_0,a))}\leq 1, \\
     \bigg(\int\limits_{\overline B(x_0,a)}w(y)^{p'(y)}d\mu(y)\bigg)^{\frac{1}{\big(p_+ (\overline B(x_0,a))\big)'}}   & \hbox{if}\ \|w\|_{L^{p'(\cdot)}(\overline{B}(x_0,a)}> 1.
  \end{array}
\right. $$ Further,
$$ \int\limits_{\overline B(x_0,a)}\!\!\! w(y)^{p'(y)}d\mu(y) =   \!\!\!\!\!\!\int\limits_{\overline B(x_0,a)\cap{\{w\leq 1\}}}\!\!\!\!\!\!w(y)^{p'(y)}d\mu(y)+   \!\!\!\!\!\!\!\!\!\int\limits_{\overline B(x_0,a)\cap{\{w> 1\}}}\!\!\!\!\!\!w(y)^{p'(y)}d\mu(y)   := I_{1}+I_{2}. $$
For $I_{1}$, we have that $ I_{1} \leq \mu\big( \overline
B(x_0,a))<\infty$. Since $L=\infty$ and condition (1) holds, there
exists a point $y_0\in X$ such that $a< d(x_0,y_0)<2a$.
Consequently, $\overline{B}(x_0,a) \subset \overline{B}(x_0,
d(x_0,y_0))$ and $    p(y) \geq p_{-} \big({ \overline
B(x_0,d(x_0,y_0))}\big) =p_{0}(y_0) $, where $y\in
\overline{B}(x_0, a)$. Consequently, the condition $A_1<\infty$
yields $I_2  \leq \int\limits_{\overline B(x_0,a)
}w(y)^{(p_0)'(y_0)}dy  <\infty $. Finally we have that $ \|
T^{(2,1)}_{v,w}f \|_{L^{p(\cdot)}(X)} \leq C$. Hence, $T_{v,w}$ is
bounded from $L^{p(\cdot)}(X)$ to $L^{q(\cdot)}(X)$. $\Box$

The proof of the following statement is similar to that of Theorem
2.1;  therefore we omit it (see also the proofs of Theorem 1.1.3
in \cite{EdKoMe} and Theorems 2.6 and 2.7 in \cite{EdKoMe1} for
similar arguments). \vskip+0.1cm

%Theorem 2.3
{\bf Theorem 2.2.} {\em  Let  $(X, d, \mu)$ be a quasi-metric
measure space  . Assume that $p$ and $q$ are measurable functions
on $X$ satisfying the condition $1<p_{-}\leq
\widetilde{p}_1(x)\leq q(x)\leq q_{+}<\infty. $ If $L= \infty$,
then we assume that $p\equiv p_c\equiv$ const, $q\equiv q_c\equiv$
const outside some ball $B(x_0,a)$.
 If
$$    B_{1}= \sup\limits_{0\leq t \leq L} \int\limits_{d(x_{0},x)\leq t}\big(v(x)\big)^{q(x)}\bigg(\int\limits_{t\leq d(x_{0},x)\leq L}
    w^{(\widetilde{p}_{1})'(x)}(y)d\mu(y)\bigg)^{\frac{q(x)}{(\widetilde{p}_1)'(x)}}d\mu(x)<\infty,$$
then $T'_{v,w}$ is bounded from $L^{p(\cdot)}(X)$ to$
L^{q(\cdot)}(X).$}

\vskip+0.1cm {\em Remark} 2.2. If $p\equiv$ const, then the
condition $A_1<\infty$ in Theorem 2.1 (resp. $B_1<\infty$ in
Theorem 2.2) is also necessary for the boundedness of $T_{v,w}$
(resp. $T'_{v,w}$) from $L^{p(\cdot)}(X)$ to $L^{q(\cdot)}(X)$.
See \cite{EdKoMe},  pp.4-5, for the details.

\section{Potentials}

In this section we discuss two--weight estimates for the potential
operators $T_{\alpha(\cdot)}$ and $I_{\alpha(\cdot)}$ on
quasi-metric measure spaces, where $0<\alpha_- \leq \alpha_+ <1$.
If $\alpha\equiv \;{\hbox{const}}$, then  we denote
$T_{\alpha(\cdot)}$ and $I_{\alpha(\cdot)}$ by $T_{\alpha}$ and
$I_{\alpha}$ respectively.

The boundedness of Riesz potential operators in
$L^{p(\cdot)}(\Omega)$ spaces, where $\Omega$ is a domain in
${\Bbb{R}}^n$ was established in \cite{Di2}, \cite{Sa2},
\cite{CrFiMaPe}, \cite{CaCrFi}.

For the following statement we refer to \cite{KoSa5}: \vskip+0.1cm

{\bf Theorem C.} {\em Let $(X, d,\mu)$ be an $\hbox{SHT}$. Suppose
that $1<p_-\leq p_+<\infty$ and $p\in {\mathcal{P}}{(1)}$. Assume
that if $L=\infty$, then $p\equiv  \; const$ outside some ball.
Let $\alpha$ be  a constant satisfying the condition $0< \alpha <
1/p_+$.  We set $q(x)= \frac{p(x)}{1-\alpha p(x)}$. Then
$T_{\alpha}$ is bounded in $L^{p(\cdot)}(X)$.} \vskip+0.1cm

{\bf Theorem D \cite{KoMe5}.} {\em Let  $(X, d, \mu)$ be a
non--homogeneous space with $L<\infty$ and let $N$ be a constant
defined by $N=a_1(1+2 a_0)$, where the constants $a_0$ and $a_1$
are taken from the definition of the quasi--metric $d$. Suppose
that  $1<p_-<p_+<\infty$, $p, \alpha \in {\mathcal{P}}(N)$ and
that $\mu$ is upper Ahlfors $1$-regular. We define $q(x)=
\frac{p(x)}{1-\alpha(x)p(x)}$, where $0< \alpha_-\leq \alpha_+<
1/p_-$. Then $I_{\alpha(\cdot)}$ is bounded from $L^{p(\cdot)}(X)$
to $L^{q(\cdot)}(X)$.} \vskip+0.1cm
%For Theorem D, in the case when $\mu$ satisfies the doubling condition, we refer also to \cite{MuSa}.

For the statements and their proofs of this section we keep the
notation of the previous sections and, in addition,  introduce
the  new notation:
\begin{eqnarray*}
v^{(1)}_{\alpha}(x):=v(x)(\mu B_{x_{0} x})^{\alpha-1},\;\;
w^{(1)}_{\alpha}(x):=w^{-1}(x);\
v^{(2)}_{\alpha}(x):=v(x);\\
 w^{(2)}_{\alpha}(x):=w^{-1}(x)(\mu B_{x_{0} x})^{\alpha-1};\\
F_x:=  \begin{array}{ll} \{ y\in X: \frac{d(x_0,y) L}{A^2a_1} \leq d(x_0, y) \leq A^2 L a_1 d(x_0,x)\}, \;\; \hbox{if} \;\; L<\infty \\
\{ y\in X: \frac{d(x_0,y)}{A^2a_1} \leq d(x_0, y) \leq A^2  a_1
d(x_0,x)\}, \;\; \hbox{if}\;\; L= \infty,
 \end{array},
\end{eqnarray*}
where $A$ and $a_1$ are constants defined in Definition 1.10  and
the triangle inequality for $d$ respectively.
 We begin  this section with the following general--type statement:

\vskip+0.1cm

 %Theorem 3.1
{\bf Theorem 3.1.} {\em Let $(X, d, \mu)$ be an $\hbox{SHT}$
without atoms. Suppose that $1<p_-\leq p_+<\infty$ and $\alpha$ is
a constant satisfying the condition $0<\alpha<1/p_+$. Let $p \in
{\mathcal{P}}(1)$. We set $q(x)=\frac{p(x)}{1-\alpha p(x)}$.
Further, if $L=\infty$, then we assume that   $p\equiv
p_{c}\equiv$ const outside some ball  $B(x_{0},a)$. Then the
inequality

$$ \| v (T_{\alpha}f) \|_{L^{q(\cdot)}(X)} \leq c \| wf\|_{L^{p(\cdot)}(X)} \eqno{(5)}$$
holds if the following three conditions are satisfied:

$(a)\;\;\;  T_{v^{(1)}_{\alpha},w^{(1)}_{\alpha}} $  is bounded
from $L^{p(\cdot)}(X)$ to $L^{q(\cdot)}(X)$ ;

$(b)\;\;\;  T_{v^{(2)}_{\alpha},w^{(2)}_{\alpha}}$  is bounded
from $L^{p(\cdot)}(X)$ to $L^{q(\cdot)}(X)$;

$(c)\;$ there is a positive constant $b$ such that one of the
following inequality holds:  $1) \; v_+(F_x)  \leq b w(x)$  for
$\mu-$ a.e. $x\in X\;\;\; $;  $2)\;
 v(x)\leq b  w_-(F_x) $ for $\mu-$ a.e. $x\in X. $}
\vskip+0.1cm

{\em Proof.}  For simplicity suppose that $L< \infty$. The proof
for the case $L=\infty$ is similar to that of the previous case.
Recall  that the sets $I_{i,k}$, $i=1,2,3$ and $E_k$ are defined
in Section 1. Let $f\geq 0$ and let $\|g\|_{L^{q'(\cdot)}(X)} \leq
1$. We have

\begin{eqnarray*}
\int\limits_{X} (T_{\alpha}f)(x) g(x) v(x) d\mu(x) = \sum_{k= -\infty}^0 \int\limits_{E_k} (T_{\alpha}f)(x) g(x) v(x) d\mu(x) \\
\leq \sum_{k=-\infty}^{0} \int\limits_{E_k} (T_{\alpha}f_{1,k})(x)
g(x) v(x) d\mu(x)+ \sum_{k=-\infty}^0 \int\limits_{E_k}
(T_{\alpha}f_{2,k})(x)
g(x) v(x) d\mu(x) \\
 + \sum_{k=-\infty}^0 \int\limits_{E_k} (T_{\alpha}f_{3,k}) (x) g(x) v(x) d\mu(x):= S_1+ S_2 +S_3,
\end{eqnarray*}
where $ f_{1,k}= f\cdot \chi_{I_{1,k}}$,  $f_{2,k}= f\cdot
\chi_{I_{2,k}}$, $f_{3,k}= f\cdot \chi_{I_{3,k}}. $

Observe that if $x\in E_k$ and $y\in I_{1,k}$, then $d (x_0, y)
\leq d(x_0,x)/Aa_1$. Consequently, the triangle inequality for $d$
yields $d(x_0,x)\leq A' a_1a_0 d(x,y)$, where $A'=A/(A-1)$. Hence,
by using Remark 1.1 we find that $ \mu(B_{x_0x})\leq c \mu(
B_{xy})$. Applying now condition (a)  we have that

$$ S_1 \leq c \bigg\| \big(\mu B_{x_0 x}\big)^{\alpha-1} v(x) \int\limits_{B_{x_0 x}} f(y) d\mu(y) \bigg\|_{L^{q(x)}(X)}
\| g \|_{L^{q'(\cdot)}(X)} \leq c \| f \|_{L^{p(\cdot)}(X)}. $$

Further,  observe that if $x\in E_k$ and $y\in I_{3,k}$, then $
\mu \big(B_{x_0y}\big) \leq c \mu\big( B_{xy}\big)$. By condition
(b) we find that $ S_3 \leq c \| f \|_{L^{p(\cdot)}(X)} $.

Now we estimate $S_2$.  Suppose that $v_+(F_x) \leq b w(x)$.
Theorem C and Lemma 1.12 yield
\begin{eqnarray*}
 S_2 \leq \sum_{k} \| \big(T_{\alpha} f_{2,k}\big)(\cdot) \chi_{E_k}(\cdot) v(\cdot) \|_{L^{q(\cdot)}(X)} \|g \chi_{E_k}(\cdot)\|_{L^{q'(\cdot)}(X)} \\ \leq \sum_{k} \Big( v_+( E_k) \Big)\| (T_{\alpha} f_{2,k})(\cdot)  \|_{L^{q(\cdot)}(X)} \| g(\cdot) \chi_{E_k}(\cdot) \|_{L^{q'(\cdot)}(X)} \\
  \leq  c \sum_{k} \Big( v_+(E_k)  \Big) \| f_{2,k} \|_{L^{p(\cdot)}(X)} \| g(\cdot) \chi_{E_k}(\cdot) \|_{L^{q'(\cdot)}(X)} \\
   \leq  c \sum_{k}  \| f_{2,k}(\cdot) w(\cdot) \chi_{I_{2,k}}(\cdot) \|_{L^{p(\cdot)}(X)} \| g(\cdot)  \chi_{E_k}(\cdot) \|_{L^{q'(\cdot)}(X)} \\
    \leq  c \| f(\cdot) w(\cdot)  \|_{L^{p(\cdot)}(X)} \| g(\cdot)  \|_{L^{q'(\cdot)}(X)} \leq  c \| f(\cdot) w(\cdot)  \|_{L^{p(\cdot)}(X)}.
\end{eqnarray*}

The estimate of $S_2$ for the case when $v(x) \leq b w_-(F_{x})$
is similar  to that of the previous one. Details are omitted.
$\Box$

\vskip+0.1cm

Theorems 3.1, 2.1 and 2.2 imply the following statement:
\vskip+0.1cm

%Theorem 3.2
{\bf Theorem 3.2.} {\em Let $(X, d, \mu)$ be an $\hbox{SHT}$.
Suppose that $1<p_-\leq p_+<\infty$ and $\alpha$ is a constant
satisfying the condition $0<\alpha<1/p_+$. Let $p \in
{\mathcal{P}}(1)$. We set $q(x)=\frac{p(x)}{1-\alpha p(x)}$. If
$L=\infty$, then we suppose that   $p\equiv p_{c}\equiv$ const
outside some ball  $B(x_{0},a)$.  Then inequality $(5)$ holds if
the following three conditions are satisfied:

$$(i)\;\;\; P_1\!:=\!\!  \sup\limits_{0<t\leq  L}\!\!\!\!\! \int\limits_{t< d(x_{0},x)\leq L}\!\!\!\!\!
\bigg(\frac{v(x)}{\big(\mu (B_{x_0
x})\big)^{1-\alpha}}\bigg)^{q(x)}
\!\bigg(\!\!\!\!\!\int\limits_{d(x_{0},y)\leq t}
\!\!\!\!\!\!\!w^{-({\widetilde{p}}_{0})'(x)}(y)d\mu(y)\bigg)^{\frac{q(x)}{({\widetilde{p}}_{0})'(x)}}\!\!\!
d\mu(x)\! <\!\infty;$$

$$ (ii)\;\;\; P_2\!:=\!\!\!\!     \sup\limits_{0<t\leq  L}\!\!\!\!\!\int\limits_{d(x_{0},x)\leq t}\!\!\!\!\!\!\!\big(v(x)\big)^{q(x)} \bigg(\!\!\!\!\!\!\!\int\limits_{t< d(x_{0},y)\leq L}
\!\!\!\!\!\!\!\!\!\!\! \Big( w(y)\big(\mu B_{x_0y}\big)^{1-\alpha}
\Big)^{-({\widetilde{p}}_{1})'(x)}\!\! d\mu(y)
\bigg)^{\frac{q(x)}{({\widetilde{p}}_1)'(x)}}\!\!\!d\mu(x)\!
<\!\infty,
$$

$(iii)\;\;\;\; $ condition $(c)$ of Theorem $3.1$ holds.}
\vskip+0.1cm

{\em Remark} 3.1. If $p= p_c\equiv $ const on $X$, then the
conditions $P_i<\infty$, $i=1,2$, are necessary for (5). Necessity
of the condition $P_1<\infty$ follows by taking the test function
$f= w^{-(p_c)'} \chi_{B(x_0,t)}$ in  (5) and observing that $\mu
B_{xy}\leq c\mu B_{x_0x}$ for those $x$ and $y$ which satisfy the
conditions $d(x_0,x)\geq t$ and $d(x_0,y)\leq t$ (see also
\cite{EdKoMe}, Theorem 6.6.1, p. 418 for the similar arguments),
while necessity of the condition $P_2<\infty$ can be derived by
choosing the test function $f(x)= w^{-(p_c)'}(x) \chi_{X\setminus
B(x_0,t)}(x)\big( \mu B_{x_0x}\big)^{(\alpha-1)((p_c)'-1)}$ and
taking into account the estimate $\mu B_{xy}\leq \mu B_{x_0y}$ for
$d(x_0,x)\leq t$ and $d(x_0,y)\geq t$. \vskip+0.1cm

The next statement follows in the same manner as the previous one.
In this case Theorem D is used instead of Theorem C. The proof is
omitted.

\vskip+0.1cm

%Theorem 3.3.
{\bf Theorem 3.3.} {\em  Let  $(X, d, \mu)$ be a non--homogeneous
space with $L<\infty$. Let $N$ be a constant defined by $N=a_1(1+2
a_0)$. Suppose that  $1<p_-\leq p_+<\infty$, $p, \alpha \in
{\mathcal{P}}(N)$ and that $\mu$ is upper Ahlfors $1$-regular. We
define $q(x)= \frac{p(x)}{1-\alpha(x)p(x)}$, where $0<
\alpha_-\leq \alpha_+< 1/p_+$. Then  the inequality

$$ \| v(\cdot) (I_{\alpha(\cdot)}f)(\cdot) \|_{L^{q(\cdot)}(X)} \leq c \| w(\cdot) f(\cdot)\|_{L^{p(\cdot)}(X)} \eqno{(6)}$$ holds if

 $$(i)\;\;\;
 \sup\limits_{0\leq t\leq L}\!\!\!\!\!\! \int\limits_{t< d(x_{0},x)\leq L} \!\!\!\!\!\!\!\bigg(\frac{v(x)}{\big(d(x_0,x)\big)^{1-\alpha(x)}}\bigg)^{q(x)} \bigg(\! \int\limits_{\overline{B}(x_0,t)} \!\!\!\!w^{-(p_{0})'(x)}(y)d\mu(y)\bigg)^{\frac{q(x)}{(p_{0})'(x)}}\!\!\!\! d\mu(x)\!<\!\infty;$$

$$ (ii)\;\;\; \sup\limits_{0\leq t \leq L}\!\!\!\int\limits_{\overline{B}(x_0,t)}\!\!\!\!\!\!\big(v(x)\big)^{q(x)}\bigg(\!\!\!\int\limits_{t< d(x_{0},y)\leq L}
\!\!\!\!\!\!\! \big(w(y) d(x_0,y)
^{1-\alpha(y)}\big)^{-(p_{1})'(x)}d\mu(y)\bigg)^{\frac{q(x)}{(p_1)'(x)}}\!\!\!\!
d\mu(x)\!<\!\infty,
$$

$(iii)\;\;\; $ condition $(c)$ of Theorem $3.1$ is satisfied.}
\vskip+0.1cm

{\em Remark} 3.2. It is easy to check that if $p$ and $\alpha$ are
constants, then conditions (i) and (ii) in Theorem 3.3 are also
necessary for (6). This follows easily by choosing appropriate
test functions in $(6)$ (see also Remark 3.1) \vskip+0.1cm

%Theorem 3.4
{\bf Theorem 3.4.} {\em  Let $(X, d, \mu)$ be an $\hbox{SHT}$
without atoms. Let $1<p_-\leq p_+<\infty$ and let $\alpha$ be a
constant with  the condition $0<\alpha <1/p_+$. We set
$q(x)=\frac{p(x)}{1-\alpha p(x)}$.  Assume  that $p$ has a minimum
at $x_0$ and  that $p \in \hbox{LH}(X)$. Suppose also that if $L=
\infty$, then $p$ is constant outside some ball $B(x_0,a)$. Let
$v$ and $w$ be positive increasing functions on $(0,2L)$. Then the
inequality
$$ \| v(d(x_0,\cdot)) (T_{\alpha} f) (\cdot) \|_{L^{q(\cdot)}(X)} \leq c \| w(d(x_0,\cdot))f(\cdot)\|_{L^{p(\cdot)}(X)} \eqno{(7)}$$
holds if
$$  I_1\!:=  \!\!\!\sup_{0 < t \leq L} \!\!\!I_1(t) \!\!:= \!\!\!\sup\limits_{0< t \leq L}\!\!\!\!\int\limits_{t< d(x_{0},x)\leq L}\!\!\!\!\bigg( \frac{v(d(x_0,x))}{\big(\mu (B_{x_0 x} )\big)^{1-\alpha}}\bigg)^{q(x)}$$
 $$ \times \bigg(\!\!\!\!\!\!\!\!\int\limits_{d(x_{0},y)\leq t}
\!\!\!\!\!\!\!\!\!
w^{-({\widetilde{p}}_{0})'(x)}(d(x_0,y))d\mu(y)\!\!
\bigg)^{\frac{q(x)}{({\widetilde{p}}_{0})'(x)}}\!\!d\mu(x) <
\infty $$ for $  L=\infty;$

$$
 J_1\!\!:=\!\!\! \sup\limits_{0<t\leq L}\!\!\!\!\!\! \int\limits_{t< d(x_{0},x)\leq L}\!\!\!\!\!\!\!\bigg(\frac{v(d(x_0,x))}{\big(\mu (B_{x_0 x} )\big)^{1-\alpha}}\bigg)^{q(x)}\!\!\!
 \bigg(\!\!\!\!\!\!\!\int\limits_{d(x_{0},y)\leq t} \!\!\!\!\!\!\!\!\!
w^{-p'(x_0)}(d(x_0,y))d\mu(y)\bigg)^{\frac{q(x)}{p'(x_0)}}\!\!\!d\mu(x)\!<\!\infty
$$
for $ L<\infty.$} \vskip+0.1cm

{\em Proof.} Let $L=\infty$. Observe that by Lemma 1.9 the
condition $p\in \hbox{LH}(X)$ implies  $p\in {\mathcal{P}}(1)$. We
will show that the condition $I_1<\infty$ implies the inequality $
\frac{v(A^2a_1t)}{w(t)} \leq C $ for all $t>0$, where $A$ and
$a_1$ are constants defined in Definition 1.10 and the triangle
inequality for $d$ respectively. Indeed, let us assume that $t\leq
b_1$, where $b_1$ is a small positive constant. Then, taking into
account the monotonicity of $v$ and $w$, and the facts that
${\widetilde{p}}_0(x) = p_0(x)$ (for small $d(x_0,x)$) and  $\mu
\in \; \hbox{RDC}(X)$, we have
$$
 I_1(t) \! \geq \!\!\!\!\!\!\! \int\limits_{A^2a_1 t\leq d(x_{0},x)< A^3 a_1 t}\!\!\!\!\!\!\!\!\! \bigg(
 \frac{v(A^2 a_1 t)}{w(t)}\bigg)^{q(x)}\!\!\!\!\! \big(\mu B(x_0, t)\big)^{(\alpha -1/p_0(x))q(x)} d\mu(x) $$

 $$
  \geq \bigg( \frac{v(A^2 a_1 t)}{w(t)}\bigg)^{q_-} \!\!\!\!\!\! \int\limits_{A^2 a_1 t\leq d(x_{0},x)<A^3 a_1 t}
  \!\!\!\!\!\!\!  \big(\mu B(x_0, t)\big)^{(\alpha- 1/p_0(x))q(x)}d\mu(x) \geq c \bigg( \frac{v(A^2 a_1 t)}{w(t)}
  \bigg)^{q_-}.
$$
Hence, $ \overline{c}:= \overline{\lim\limits_{t\to 0}}
\frac{v(A^2 a_1 t)}{w(t)} <\infty$. Further, if $t>b_2$, where
$b_2$ is a large number, then since $p$ and $q$  are  constants,
for $d(x_0, x)>t$, we have that
\begin{eqnarray*}
 I_1(t) \geq   \bigg( \int\limits_{A^2 a_1 t\leq d(x_{0},x)<A^3 a_1 t} v(d(x_0,x))^{q_c}
 \big(\mu B(x_0, t)\big)^{(\alpha-1)q_c}d\mu(x) \bigg)\\
 \times \bigg( \int\limits_{B(x_0,t)} w^{-(p_c)'}(x) d\mu(x) \bigg)^{ q_c/ (p_c)'} d\mu(x) \\
  \geq C \bigg( \frac{v(A^2 a_1 t)}{w(t)}\bigg)^{q_c} \!\!\!\! \int\limits_{A^2 a_1 t\leq d(x_{0},x)<A^3 a_1 t}
  \!\!\!\! \big(\mu B(x_0, t)\big)^{(\alpha- 1/p_c)q_c}d\mu(x) \geq c \bigg( \frac{v(A^2 a_1t)}{w(t)}\bigg)^{q_c}.
\end{eqnarray*}

In the last inequality we used the fact that $\mu$ satisfies the
reverse doubling condition.

Now we show that the condition $I_1 <\infty$  implies
\begin{eqnarray*}
 \sup_{t>0} I_2(t) := \sup\limits_{t>0}\int \limits_{d(x_{0},x)\leq t} (v(d(x_0,x)))^{q(x)}
 \bigg(\int\limits_{d(x_{0},y)> t}\! \! \!
w^{-({\widetilde{p}}_{1})'(x)} (d(x_0,y))\\
\times \big( \mu(B_{x_0y})
\big)^{(\alpha-1)(\widetilde{p}_1)'(x)}d\mu(y)\bigg)^{\frac{q(x)}{({\widetilde{p}}_{1})'(x)}}d\mu(x)<\infty
\end{eqnarray*}

Due to monotonicity of functions $v$ and $w$, the condition $p\in
LH(X)$, Proposition 1.4, Lemma 1.7, Lemma 1.9  and the assumption
that $p$ has a minimum at $x_0$, we find that for all $t>0$,
\begin{eqnarray*} I_2 (t) \leq \int\limits_{d(x_{0},x)\leq t}\Big(\frac{v(t)}{w(t)}\Big)^{q(x)}
\Big(\mu \big( B(x_0, t)\big)\Big)^{(\alpha-1/p(x_0))q(x)} d\mu(x)\\
 \leq  c \int\limits_{d(x_{0},x)\leq t}\Big(\frac{v(t)}{w(t)}\Big)^{q(x)} \Big(\mu \big( B(x_0, t)
 \big)\Big)^{\big(\alpha-1/p(x_0)\big)q(x_0)} d\mu(x) \\
  \leq  c \bigg(\int\limits_{d(x_{0},x)\leq t}\Big(\frac{v(A^2 a_1 t)}{w(t)}\Big)^{q(x)} d\mu(x)\bigg)
  \Big(\mu \big( B(x_0, t)\big)\Big)^{-1} \leq C.
  \end{eqnarray*}
Now Theorem 3.2 completes the proof. $\Box$

\vskip+0.1cm

%Theorem 3.5
{\bf Theorem 3.5.} {\em  Let $(X,d,\mu)$ be an $SHT$ with
$L<\infty$. Suppose that $p$, $q$ and $\alpha$ are measurable
functions on $X$ satisfying the conditions: $1<p_-\leq p(x)\leq
q(x)\leq q_+<\infty$ and $1/p_-<\alpha_-\leq \alpha_+<1$. Assume
that there is a point $x_0$ such that  $\mu\{ x_0 \}=0$ and $p,q,
\alpha \in \hbox{LH}(X, x_0)$. Suppose also that $w$ is a positive
increasing function on $(0,2L)$.Then the inequality
$$ \|  \big(T_{\alpha(\cdot)} f\big) v\|_{L^{q(\cdot)}(X)} \leq c \| w (d(x_0,\cdot)) f(\cdot) \|_{L^{p(\cdot)}(X)} $$
holds if the following two conditions are satisfied:
\begin{eqnarray*}
\widetilde{I}_1:= \sup\limits_{0<  t\leq L}\int\limits_{t\leq  d(x_{0},x)\leq L}\bigg(\frac{v(x)}{\big(\mu B_{x_0x}\big)^{1-\alpha(x)}}\bigg)^{q(x)}\\
\times \Big(\int\limits_{d(x_{0},x)\leq t}  w^{-(p_{0})'(x)}(d(x_0,y))d\mu(y)\Big)^{\frac{q(x)}{(p_{0})'(x)}}d\mu(x)<\infty;\\
 \widetilde{I}_2:=\sup\limits_{0< t\leq L}\!\!\! \int\limits_{d(x_{0},x)\leq t}\!\!\! \big(v(x)\big)^{q(x)}\bigg(\int\limits_{t\leq  d(x_{0},x)\leq L}
\!\!\! \Big( w(d(x_0,y)) \\
\times \big( \mu B_{x_0y}\big)^{1-\alpha(x)}
\Big)^{-(p_{1})'(x)}d\mu(y)
\bigg)^{\frac{q(x)}{(p_1)'(x)}}d\mu(x)<\infty.
\end{eqnarray*}}

\vskip+0.1cm

{\em Proof.} For simplicity assume that $L=1$. First observe that
by Lemma 1.9 we have  $p,q,\alpha\in {\mathcal{P}}(1)$. Suppose
that $f\geq 0$ and $S_{p} \big( w(d(x_0,\cdot)) f(\cdot) \big)
\leq 1$. We will show that $S_{q} \big( v (T_{\alpha(\cdot)}
f)\big) \leq C$.

We have
$$S_{q} \big( v T_{\alpha(\cdot)} f \big)   \leq C_q \bigg[ \int\limits_{X}  \bigg( v(x) \int\limits_{ d(x_0,y)\leq
d(x_0,x)/(2a_1) } f(y) \big( \mu B_{xy} \big)^{ \alpha(x)-1 } d\mu(y) \bigg)^{q(x)} d \mu(x) $$
$$
+  \int_X  \bigg(v(x) \int\limits_{d(x_0,x)/(2a_1) \leq d(x_0,y)
\leq  2a_1 d(x_0,x)} f(y) \big( \mu B_{xy}
\big)^{\alpha(x)-1}d\mu(y) \bigg)^{q(x)} d\mu(x) $$
$$
 + \int\limits_X  \bigg(v(x) \int\limits_{d(x_0,y) \geq   2a_1 d(x_0,x)} f(y) \big( \mu B_{xy}
 \big)^{\alpha(x)-1}d\mu(y) \bigg)^{q(x)} d\mu(x) \bigg] := C_q [I_1 +I_2+I_3].
$$

First observe that by virtue of  the doubling condition for $\mu$,
Remark 1.1 and simple calculation we find that $ \mu \big(
B_{x_0x}\big) \leq c \mu \big(B_{xy}\big)$. Taking into account
this estimate  and Theorem 2.1 we have that

$$ I_1 \leq c  \int_X  \bigg( \frac{v(x)}{ \big( \mu B_{x_0x} \big)^{1-\alpha(x)}} \int\limits_{d(x_0,y)<
d(x_0,x)} f(y) d\mu(y) \bigg)^{q(x)} d\mu(x) \leq C. $$

Further, it is easy to see that if $d(x_0,y)\geq 2a_1 d(x_0,x)$,
then the triangle inequality for $d$ and the doubling condition
for $\mu$ yield that $\mu B_{x_0y} \leq c \mu B_{xy}.$  Hence due
to Proposition 1.5 we see that $ \big( \mu
B_{x_0y}\big)^{\alpha(x)-1} \geq c  \big( \mu
B_{xy}\big)^{\alpha(y)-1}$ for such $x$ and $y$. Therefore,
Theorem 2.2 implies that $ I_3 \leq C.$

It remains to estimate $I_2$. Let us denote:
$$ E^{(1)}(x) \!:=\! \overline{B}_{x_0 x}\setminus B\big(x_0, d(x_0,x)/(2a_1)\big); \;\; E^{(2)}(x):=
\overline{B}\big(x_0, 2a_1 d(x_0,x)\big) \setminus B_{x_0  x}.$$
Then we have that
$$
I_2 \leq C \bigg[ \int\limits_{X} \Big[v(x)
\int\limits_{E^{(1)}(x)} f(y) \big( \mu B_{x y}\big)^{\alpha(x)-1}
d\mu (y) \Big]^{q(x)} d\mu(x) $$
$$
 + \int\limits_{X} \Big[v(x) \int\limits_{E^{(2)}(x)} f(y) \big( \mu B_{x y}\big)^{\alpha(x)-1} d\mu (y)
 \Big]^{q(x)} d\mu(x)\bigg] := c[ I_{21}+I_{22}].
$$
Using H\"older's inequality for the classical Lebesgue spaces we
find that
$$I_{21} \leq   \int\limits_X  v^{q(x)} (x) \bigg( \int\limits_{E^{(1)}(x)} w^{p_0(x)} (d(x_0,y))(f(y))^{p_0(x)}
d\mu(y) \bigg)^{q(x)/p_0(x)}$$
$$ \times \bigg( \int\limits_{E^{(1)}(x)}
w^{-(p_0)'(x)}(d(x_0,y))\big( \mu
B_{xy}\big)^{(\alpha(x)-1)(p_0)'(x)} d\mu(y)
\bigg)^{q(x)/(p_0)'(x)} d\mu(x).
$$

Denote the first inner integral by $J^{(1)}$ and the second one by
$J^{(2)}$.

By using the fact that $p_0(x) \leq p(y)$, where $y\in
E^{(1)}(x)$, we see that $J^{(1)} \leq \mu( B_{x_0 x} )+
\int\limits_{E^{(1)}(x)} (f(y))^{p(y)} \big(
w(d(x_0,y))\big)^{p(y)} d\mu(y) $, while by applying  Lemma 1.7,
for $J^{(2)}$,  we have that
\begin{eqnarray*}J^{(2)} \leq c w^{-(p_0)'(x)} \Big( \frac{d(x_0, x)}{2a_1}\Big) \int\limits_{E^{(1)}(x)} \Big( \mu B_{x y} \Big)^{(\alpha(x)-1)(p_0)'(x)} d\mu(y)  \\
\leq c w^{-(p_0)'(x)}\Big(\frac{d(x_0,x)}{2a_1}\Big)  \Big( \mu
B_{x_0 x} \Big)^{(\alpha(x)-1)(p_0)'(x)+1}.
\end{eqnarray*}

Summarizing these estimates for $J^{(1)}$ and $J^{(2)}$ we
conclude that
\begin{eqnarray*}
I_{21} \leq  \int\limits_X  v^{q(x)} (x) \big( \mu B_{x_0x}\big)^{q(x)\alpha(x)} w^{-q(x)}\Big(\frac{d(x_0,x)}{2a_1}\Big) d\mu(x)  + \int\limits_X  v^{q(x)} (x) \\
\times \bigg( \int\limits_{E^{(1)}(x)} w^{p(y)}
(d(x_0,y))(f(y))^{p(y)} d\mu(y) \bigg)^{q(x)/p_0(x)} \big( \mu
B_{x_0x}\big)^{q(x)(\alpha(x)-1/p_0(x))} \\
\times w^{-q(x)}\Big(\frac{d(x_0,x)}{2a_1}\Big)d\mu(x) =:
I_{21}^{(1)}+ I_{21}^{(2)}.
\end{eqnarray*}

By applying monotonicity of $w$, the reverse doubling property for
$\mu$ with the  constants  $A$ and $B$ (see Remark 1.3), and  the
condition $\widetilde{I}_1<\infty$   we have that
$$I_{21}^{(1)} \!\!\leq c \!\!\sum_{k=-\infty}^{0} \int\limits_{ \overline{B} (x_0, A^k)
\setminus B(x_0, A^{k-1})}\!\!\!\!\!\!\!\!\!\!\!\!\!\!\!\!v(x)^{q(x)} \bigg( \!\!\!\!\!\!\!
\int\limits_{B \big( x_0, \frac{A^{k-1}}{2a_1}\big)}\!\!\!\!\!\!\!\!\!\! w^{-(p_0)'(x)} (d(x_0,y)) d\mu(y)
\bigg)^{\frac{q(x)}{(p_0)'(x)}} $$

$$
\times \big( \mu
B_{x_0,x}\big)^{\frac{q(x)}{p_0(x)}+(\alpha(x)-1)q(x)}d\mu(x)
\leq c \sum_{k=-\infty}^0 \!\!\! \Big( \mu \overline{B} (x_0,
A^k)\Big)^{q_-/p_+} $$

$$
\times \int\limits_{\overline{B}(x_0, A^k) \setminus B(x_0,
A^{k-1})} \!\!\!\!\!\!\!\!\! v(x)^{q(x)} \bigg(\int\limits_{B
\big( x_0, A^{k}\big)} \!\!\!\!\!\!\!\! w^{-(p_0)'(x)} (d(x_0,y))
d\mu(y) \bigg)^{\frac{q(x)}{(p_0)'(x)}}
$$

$$
 \times \big( \mu B_{x_0,x}\big)^{q(x)(\alpha(x)-1)}d\mu(x)
 \leq c \sum_{k=-\infty}^0 \Big( \mu \bar{B}(x_0, A^k) \setminus B(x_0, A^{k-1}) \Big)^{q_-/p_+} $$

 $$
 \leq c \sum_{k=-\infty}^0
\int\limits_{\mu \bar{B}(x_0, A^k) \setminus B(x_0, A^{k-1})}
\!\!\!\!\!\!\!\! \big( \mu B_{x_0,x}\big)^{q_-/p_+-1} d\mu(y)
$$

$$
 \leq c \int_{X} \big( \mu B_{x_0,x}\big)^{q_-/p_+-1} d\mu(y)
<\infty.
$$

 Due to the facts that $q(x) \geq p_0(x)$,
$S_p\big(w\big(d(x_0,\cdot)f(\cdot)\big)\big)\leq 1$,
$\widetilde{I}_1<\infty$  and $w$ is increasing, for
$I_{21}^{(2)}$, we find that
 \begin{eqnarray*}
 I_{21}^{(2)} \leq c \sum_{k=-\infty}^0 \bigg( \int\limits_{\mu \bar{B}(x_0, A^{k+1}a_1) \setminus B(x_0, A^{k-2})} w^{p(y)}(d(x_0,y))(f(y))^{p(y)} d\mu(y) \bigg) \\ \times\bigg( \int\limits_{\mu \bar{B}(x_0, A^k) \setminus B(x_0, A^{k-1})}v^{q(x)}(x) \bigg( \int\limits_{B(x_0,A^{k-1})} w^{-(p_0)'(x)} (d(x_0,y)) d\mu(y) \bigg)^{\frac{q(x)}{(p_0)'(x)}} \\
  \times\big(\mu B_{x_0,x}\big)^{(\alpha(x)-1)q(x)}d\mu(x) \bigg) \leq c S_p(f(\cdot)w(d(x_0,\cdot)) \leq c.
  \end{eqnarray*}

Analogously, it follows the estimate for $I_{22}$. In this case we
use the condition $\widetilde{I}_2<\infty$ and the fact that
$p_1(x) \leq p(y)$ when $d(x_0,y) \leq d(x_0,y) < 2a_1 d(x_0,x)$.
The details are omitted. The theorem is proved.  $\Box$
\vskip+0.1cm

Taking into account the proof of Theorem 3.5 we can easily derive
the following statement proof of which is omitted: \vskip+0.1cm

%Theorem 3.6
{\bf Theorem 3.6.} {\em Let $(X,d,\mu)$ be an $SHT$ with
$L<\infty$. Suppose that $p$, $q$ and $\alpha$ are measurable
functions on $X$ satisfying the conditions $1<p_-\leq p(x)\leq
q(x)\leq q_+<\infty$ and $1/p_-<\alpha_-\leq \alpha_+<1$. Assume
that there is a point $x_0$ such that  $p,q, \alpha \in
\hbox{LH}(X, x_0)$ and $p$ has a minimum at $x_0$. Let $v$ and $w$
be  positive increasing function on $(0,2L)$ satisfying  the
condition $J_1<\infty$ $($ see Theorem  $3.4\;)$. Then inequality
$(7)$ is fulfilled.}

\vskip+0.1cm

%Theorem 3.7
{\bf Theorem 3.7.} {\em  Let  $(X, d, \mu)$ be an SHT with
$L<\infty$ and let $\mu$ be upper Ahlfors $1$-regular. Suppose
that  $1<p_-\leq p_+<\infty$ and that $p \in {\overline{LH}}(X)$.
Let $p$ have a minimum at $x_0$. Assume that $\alpha$ is constant
satisfying the condition $\alpha< 1/p_+$. We set $q(x)=
\frac{p(x)}{1-\alpha p(x)}$.  If $v$ and $w$ are positive
increasing functions on $(0,2L)$ satisfying the condition

$$E:=\!\!\!\sup\limits_{0\leq t\leq L}\!\!\!\!\!\!\! \int\limits_{t< d(x_{0},x)\leq L} \!\!\!\!\!\!\!\!\bigg(\frac{v(d(x_0,x))}{\big(d(x_0,x)\big)^{1-\alpha}}\bigg)^{q(x)}  \bigg(\!\!\! \int\limits_{d(x_{0},x)\leq t} \!\!\!\!\!\!\!\!w^{-(p_{0})'(d(x_0,x))}(y)d\mu(y)\bigg)^{\frac{q(x)}{(p_{0})'(x)}}\!\!\! d\mu(x)<\infty, $$
then the inequality

$$ \| v \big(d(x_0,\cdot)\big) (I_{\alpha}f)(\cdot) \|_{L^{q(\cdot)}(X)} \leq c \| w\big(d(x_0,\cdot)\big) f(\cdot)\|_{L^{p(\cdot)}(X)} $$
holds.}

\vskip+0.1cm

{\em Proof} is similar to that of Theorem 3.4. We only discuss
some details. First observe that due to Remark 1.2 we have that
$p\in {\mathcal{P}}(N)$, where $N= a_1(1+2a_0)$. It is easy to
check that the condition $E<\infty$ implies that
$\frac{v(A^2a_1t)}{w(t)} \leq C $ for all t, where the constant
$A$ is defined in Definition 1.10 and $a_1$ is from the triangle
inequality for $d$. Further, Lemmas 1.7, 1.9, the fact that $p$
has a minimum at $x_0$ and the inequality
$$\int\limits_{d(x_0,y)>t}\big( d(x_0,y)\big)^{(\alpha-1)(p_1)'(x)} d\mu(y) \leq c t^{(\alpha-1)(p_1)'(x)+1},$$
where the constant $c$ does not depend on $t$ and $x$, yield that
$$ \sup\limits_{0\leq t \leq L}\!\!\int\limits_{d(x_{0},x)\leq t} \!\!\! (v(d(x_0,x)))^{q(x)}  \bigg(\int\limits_{d(x_{0},y)> t}\! \! \!\!\!
\bigg(\frac{w (d(x_0,y))}{\big( d(x_0,y)
\big)^{1-\alpha}}\bigg)^{-(p_{1})'(x)} \!\!\!\!
d\mu(y)\bigg)^{\frac{q(x)}{(p_{1})'(x)}}\!\!d\mu(x)<\infty.
$$
Theorem 3.3 completes the proof. $\!\!\!\Box$ \vskip+0.2cm

%Example 3.7
{\bf Example 3.8.} {\em  Let $v(t)=t^{\gamma}$ and $w(t)=
t^{\beta}$, where $\gamma$ and $\beta$ are constants satisfying
the condition $ 0\leq \beta< 1/(p_-)'$, $\gamma\geq \max\{0,\;\;
1-\alpha-\frac{1}{q_+}-\frac{q_-}{q_+}(-\beta+\frac{1}{(p_-)'})\}$.
Then $(v,w)$ satisfies  the conditions of Theorem $3.4$.}

\section{Maximal and Singular Operators}
Let
$$ Kf (x)= p.v. \int\limits_X k(x,y)f(y) d\mu(y), $$
where  $k: X\times X\setminus \{(x,x): x\in X\} \to {\Bbb{R}}$ be a measurable function satisfying the conditions:
\begin{gather*}
|k(x,y)|\leq \frac{c}{\mu B(x, d(x,y))}, \;\; x,y\in X, \;\; x\neq y; \\
 |k(x_1,y)-k(x_2,y)|+ |k(y, x_1)-k(y, x_2)| \leq c \omega \Big( \frac{d(x_2, x_1)}{d(x_2, y)}\Big) \frac{1}{\mu B(x_2, d(x_2,y))}
 \end{gather*}
for all $x_1, x_2$ and $y$ with $d(x_2,y)\geq c d(x, x_2)$, where
$\omega$ is a positive non-decreasing function on $(0,\infty)$
which satisfies the  $\Delta_2$ condition: $\omega(2t)\leq c
\omega(t)$ ($t>0$);  and the Dini condition: $\int_0^1
\big(\omega(t)/t\big) dt <\infty$.

We also assume that for some constant $s$, $1<s<\infty$, and all
$f\in L^{s}(X)$ the limit $Kf(x)$ exists almost everywhere on $X$
and that $K$ is bounded in $L^{s}(X)$. \vskip+0.1cm It is known
(see, e.g., \cite{EdKoMe}, Ch. 7) that if  $r$ is constant such
that $1<r<\infty$, $(X, d, \mu)$ is an SHT and  the weight
function $w\in A_r(X)$, i.e.
$$ \sup_{B} \bigg(\frac{1}{\mu (B)} \int\limits_B w(x) d\mu(x)\bigg)  \bigg( \frac{1}{\mu (B)}
\int\limits_B w^{1-r'}(x) d\mu(x)\bigg)^{r-1}<\infty,$$
where the supremum is taken over all balls $B$ in $X$, then
the one--weight inequality $ \| w^{1/r} Kf\|_{L^{r}(X)} \leq c \|w^{1/r} f \|_{L^{r}(X)}$ holds.
\vskip+0.1cm

The boundedness of Calder\'on--Zygmund operators in $L^{p(\cdot)}({\Bbb{R}}^n)$ was establish in \cite{DiRu}.

\vskip+0.1cm

{\bf Theorem E \cite{KoSa8}.} {\em Let $(X, d,\mu)$ be an \hbox{SHT}.  Suppose that $p\in {\mathcal{P}}(1)$.  Then the singular operator $K$ is bounded in $L^{p(\cdot)}(X)$.}
\vskip+0.1cm

Before formulating the main results of this section we introduce the notation:
$$ \overline{v}(x):= \frac{v(x)}{\mu (B_{x_0 x})},\;\;\; \widetilde{w}(x) := \frac{1}{w(x)}, \;\; \widetilde{w}_1(x):=
\frac{1}{w(x)\mu (B_{x_0  x})}.$$

The following  statements follows in the same way as Theorem 3.1 was proved. In this case Theorem 1.2 (for the maximal operator) and Theorem E (for singular integrals)  are used instead of Theorem C.  Details are omitted.
%Theorem 4.1
\begin{theorem}Let $(X, d, \mu)$ be an $\hbox{SHT}$ and let  $1<p_-\leq
p_+<\infty$. Further suppose that $p\in {\mathcal{P}}(1)$. If
$L=\infty$, then we assume that $p$ is constant outside some ball $B(x_0,a)$. Then the
inequality

$$ \| v (Nf) \|_{L^{p(\cdot)}(X)} \leq C \| w f \|_{L^{p(\cdot)}(X)}, \eqno{(8)}$$
where $N$ is $M$ or $K$, holds if the following three conditions are satisfied:

$(a)$ $\;\;\;\; T_{\overline{v}, \widetilde{w}}$ is bounded in $L^{p(\cdot)}(X)$;

$(b)$ $\;\;\;\; T'_{v, \widetilde{w}_1}$ is bounded in
$L^{p(\cdot)}(X)$;

$(c)$  there is a positive constant $b$ such that one of the
following two conditions hold:   $1)\;\; v_+(F_x) \leq b w(x)$ $\mu-$ a.e. $x\in X$; $2)\;\; v(x)\leq b \; w_-(F_x)$ $\mu-$ a.e. $x\in X $,
where $F_x$ is the set depended on $x$ which is defined in Section $3$.
\end{theorem}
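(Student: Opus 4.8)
The plan is to reproduce, essentially verbatim, the scheme used to prove Theorem 3.1, with $T_\alpha$ replaced by $N\in\{M,K\}$, the target exponent $q$ replaced by $p$, and the two auxiliary Hardy operators replaced by $T_{\overline{v},\widetilde{w}}$ and $T'_{v,\widetilde{w}_1}$. First I would pass, by the duality of $L^{p(\cdot)}(X)$, to estimating $\int_X (Nf)(x)\,g(x)\,v(x)\,d\mu(x)$ for a fixed nonnegative $f$ (bounded, with bounded support, so that $Kf$ is defined; the general case follows by a standard limiting argument) and a nonnegative $g$ with $\|g\|_{L^{p'(\cdot)}(X)}\le 1$. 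I would take $L<\infty$ for definiteness, the case $L=\infty$ being handled exactly as in Theorem 3.1, by splitting according to whether the variable lies in $B(x_0,a)$ and using that $p\equiv p_c$ outside that ball. Writing $X=\bigcup_k E_k$ and, for $x\in E_k$, $f=f_{1,k}+f_{2,k}+f_{3,k}$ with $f_{i,k}=f\chi_{I_{i,k}}$, subadditivity of $M$ (resp. linearity of $K$) gives $\int_X(Nf)\,g\,v\,d\mu\le S_1+S_2+S_3$ with $S_i=\sum_k\int_{E_k}(Nf_{i,k})\,g\,v\,d\mu$.

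The two outer terms are controlled by conditions (a) and (b). For $S_1$: if $x\in E_k$ and $y\in I_{1,k}$ then $I_{1,k}\subseteq B_{x_0x}$ and $d(x_0,y)$ is smaller than $d(x_0,x)$ by a fixed factor, so the triangle inequality and the doubling condition give $\mu B\bigl(x,d(x,y)\bigr)\approx\mu(B_{x_0x})$; consequently the kernel bound $|k(x,y)|\le c/\mu B(x,d(x,y))$ for $K$ (resp., for $M$, the fact that any averaging ball $B(x,r)$ meeting $I_{1,k}$ must have radius $r\ge c\,d(x_0,x)$, whence $\mu B(x,r)\ge c^{-1}\mu(B_{x_0x})$) yields, on $E_k$,
\[v(x)\,(Nf_{1,k})(x)\le c\,\frac{v(x)}{\mu(B_{x_0x})}\int_{B_{x_0x}}f\,d\mu=c\,T_{\overline{v},\widetilde{w}}(wf)(x).\]
Summing in $k$, applying H\"older's inequality and condition (a) gives $S_1\le c\|wf\|_{L^{p(\cdot)}(X)}$. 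The term $S_3$ is symmetric: for $x\in E_k$, $y\in I_{3,k}$ one has $I_{3,k}\subseteq X\setminus\overline{B}_{x_0x}$ and $\mu B(x,d(x,y))\approx\mu(B_{x_0y})$, so $v(x)(Nf_{3,k})(x)\le c\,v(x)\int_{X\setminus\overline{B}_{x_0x}}\frac{f(y)}{\mu(B_{x_0y})}\,d\mu(y)=c\,T'_{v,\widetilde{w}_1}(wf)(x)$ on $E_k$, and condition (b) yields $S_3\le c\|wf\|_{L^{p(\cdot)}(X)}$.

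The middle term $S_2$ is where the boundedness of $N$ in $L^{p(\cdot)}(X)$ and Lemma 1.12 enter, and it is the step requiring the most care. Using $v\le v_+(E_k)$ on $E_k$, H\"older's inequality, and the $L^{p(\cdot)}(X)$-boundedness of $M$ (Theorems A and B; here $p\in\mathcal{P}(1)$ is used) or of $K$ (Theorem E), one obtains $S_2\le c\sum_k v_+(E_k)\,\|f\chi_{I_{2,k}}\|_{L^{p(\cdot)}(X)}\,\|g\chi_{E_k}\|_{L^{p'(\cdot)}(X)}$. The geometric core is then to verify from the definitions that $E_k\subseteq I_{2,k}$ and that $E_k\subseteq F_y$ for every $y\in I_{2,k}$ (the inner and outer radii of the annulus $F_y$ bracket those of $E_k$). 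Under the first alternative of (c) this gives $v_+(E_k)\le b\,w(y)$ for $y\in I_{2,k}$, hence $v_+(E_k)\|f\chi_{I_{2,k}}\|_{L^{p(\cdot)}(X)}\le b\|wf\chi_{I_{2,k}}\|_{L^{p(\cdot)}(X)}$, and after enlarging the support of $g$ from $E_k$ to $I_{2,k}$, Lemma 1.12 (applicable since $p=q\in LH(X,x_0)$ by Lemma 1.9, and $p$ is constant outside a ball when $L=\infty$) yields $S_2\le c\sum_k\|wf\chi_{I_{2,k}}\|_{L^{p(\cdot)}(X)}\|g\chi_{I_{2,k}}\|_{L^{p'(\cdot)}(X)}\le c\|wf\|_{L^{p(\cdot)}(X)}$; under the second alternative of (c) one instead keeps $v$ inside and uses $v(x)\le b\,w_-(F_x)$ in the same fashion. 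Collecting $S_1,S_2,S_3$ gives $(8)$. I expect the main obstacle to be exactly this bookkeeping with the nested annuli $I_{1,k},I_{2,k},I_{3,k},E_k,F_x$ and the accompanying doubling-condition manipulations that replace $|k(x,y)|$, or the maximal average over $B(x,r)$, by the Hardy kernels of $T_{\overline{v},\widetilde{w}}$ and $T'_{v,\widetilde{w}_1}$; once the containments $I_{1,k}\subseteq B_{x_0x}$, $I_{3,k}\subseteq X\setminus\overline{B}_{x_0x}$, $E_k\subseteq I_{2,k}$, and $E_k\subseteq F_y$ for $y\in I_{2,k}$ are in hand, the rest is a direct appeal to Theorems A, B, E, to Lemma 1.12, and to hypotheses (a)--(c).
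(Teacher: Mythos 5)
Your proposal is correct and follows essentially the same route as the paper, which itself only remarks that Theorem 4.1 ``follows in the same way as Theorem 3.1 was proved,'' with Theorems A/B (for $M$) and Theorem E (for $K$) replacing Theorem C. Your fleshing out of the kernel/averaging estimates reducing $Nf_{1,k}$ and $Nf_{3,k}$ to the Hardy operators $T_{\overline{v},\widetilde{w}}$ and $T'_{v,\widetilde{w}_1}$, and the use of Lemma 1.12 with condition (c) for the middle term, matches exactly what the authors intend and omit.
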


The next two statements are direct consequences of Theorems 4.1, 2.1 and  2.2.

%Theorem 4.2
\begin{theorem} Let $(X, d, \mu)$ be an $\hbox{SHT}$ and let
$1<p_-\leq p_+<\infty$. Further suppose that $p\in
{\mathcal{P}}(1)$. If $L=\infty$, then we assume that  $p\equiv$
$p_c\equiv$ const  outside some ball $B(x_0,a)$.  Let $N$ be $M$ or $K$. Then
inequality $(8)$ holds if:

\begin{eqnarray*}
(i) \;\;   \sup\limits_{0< t\leq L}\int\limits_{t< d(x_{0},x) \leq L}\bigg(\frac{v(x)}{\mu B_{x_0x}}\bigg)^{p(x)} \bigg(\int\limits_{\overline{B}(x_0,t)}w^{-({\widetilde{p}}_{0})'(x)}(y)d\mu(y)\bigg)^{\frac{p(x)}{({\widetilde{p}}_{0})'(x)}}d\mu(x)<\infty;\\
(ii)\;\;     \sup\limits_{0< t\leq L}\int\limits_{\overline{B}(x_0, t)} \big(v(x)\big)^{p(x)} \bigg(\int\limits_{t< d(x_{0},x)\leq L}
\bigg(\frac{w(y)}{\mu B_{x_0y}}\bigg)^{-({\widetilde{p}}_{1})'(x)}d\mu(y)\bigg)^{\frac{p(x)}{({\widetilde{p}}_1)'(x)}}d\mu(x)<\infty;
\end{eqnarray*}
$(iii)\;\;\;\; $  condition $(c)$ of the previous theorem is satisfied.\end{theorem}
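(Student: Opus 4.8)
The plan is to reduce Theorem 4.2 to Theorem 4.1 by checking that conditions $(i)$ and $(ii)$ above imply, respectively, conditions $(a)$ and $(b)$ of Theorem 4.1, while condition $(iii)$ is literally condition $(c)$ of Theorem 4.1. The point is that the operators $T_{\overline v,\widetilde w}$ and $T'_{v,\widetilde w_1}$ appearing in Theorem 4.1 are exactly Hardy-type transforms of the form studied in Section 2, with the specific weight choices $v\rightsquigarrow\overline v=v/\mu(B_{x_0x})$, $w\rightsquigarrow\widetilde w=1/w$ for the first operator and $v\rightsquigarrow v$, $w\rightsquigarrow\widetilde w_1=1/(w\,\mu(B_{x_0x}))$ for the second, taken in the diagonal case $q(\cdot)\equiv p(\cdot)$.

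First I would apply Theorem 2.1 with the substitutions above. Writing out the quantity $A_1$ of Theorem 2.1 for the weights $\overline v$ and $\widetilde w$ and exponent $q=p$, one gets precisely
$$\sup_{0<t\le L}\int\limits_{t<d(x_0,x)\le L}\Bigl(\tfrac{v(x)}{\mu B_{x_0x}}\Bigr)^{p(x)}\Bigl(\int\limits_{\overline B(x_0,t)}w^{-(\widetilde p_0)'(x)}(y)\,d\mu(y)\Bigr)^{\frac{p(x)}{(\widetilde p_0)'(x)}}d\mu(x),$$
which is the hypothesis $(i)$. Hence $(i)$ gives the boundedness of $T_{\overline v,\widetilde w}$ in $L^{p(\cdot)}(X)$, i.e. condition $(a)$ of Theorem 4.1. (Here one must check the structural hypotheses of Theorem 2.1: that $1<p_-\le\widetilde p_0(x)\le p(x)\le p_+<\infty$, which holds since $\widetilde p_0(x)=p_-(\overline B_{x_0x})\le p(x)$ and $p$ is constant outside a ball when $L=\infty$; the monotone-in-$t$ structure of $A_1$ matches because $B_{x_0x}$ is an increasing family of balls.) Symmetrically, applying Theorem 2.2 with weights $v$ and $\widetilde w_1$ and exponent $q=p$, the quantity $B_1$ there becomes exactly the expression in $(ii)$, so $(ii)$ yields the boundedness of $T'_{v,\widetilde w_1}$, i.e. condition $(b)$ of Theorem 4.1. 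Finally $(iii)$ is verbatim condition $(c)$ of Theorem 4.1, so all three hypotheses of Theorem 4.1 are in force and $(8)$ follows for $N=M$ and $N=K$.

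I do not expect a genuine obstacle here, since the theorem is explicitly advertised as a "direct consequence" of Theorems 4.1, 2.1 and 2.2; the only care needed is bookkeeping. The mildly delicate point is matching the auxiliary exponents: in Theorem 2.1 the inner exponent is $(\widetilde p_0)'$ built from $p_-(\overline B_{x_0x})$, and one must confirm this is the same $\widetilde p_0$ that appears in $(i)$—it is, by the notation fixed at the start of Section 2—and likewise that the "constant outside a ball" assumption for $L=\infty$ is inherited correctly so that $\widetilde p_0$ and $\widetilde p_1$ are well defined and Theorems 2.1–2.2 apply without change. Once these identifications are made explicit, the proof is complete, so I would simply state: "Apply Theorem 2.1 (resp. Theorem 2.2) with $v\to\overline v$, $w\to\widetilde w$ (resp. $v\to v$, $w\to\widetilde w_1$) and $q\equiv p$; conditions $(i)$ and $(ii)$ are then the hypotheses $A_1<\infty$ and $B_1<\infty$, which give $(a)$ and $(b)$ of Theorem 4.1. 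Together with $(iii)=(c)$, Theorem 4.1 yields $(8)$. $\Box$"
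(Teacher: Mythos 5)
Your proposal is correct and follows exactly the route the paper intends: the paper states Theorem 4.2 as a ``direct consequence of Theorems 4.1, 2.1 and 2.2,'' and your reduction --- identifying $(i)$ with $A_1<\infty$ for $T_{\overline v,\widetilde w}$, $(ii)$ with $B_1<\infty$ for $T'_{v,\widetilde w_1}$ (both with $q\equiv p$), and $(iii)$ with condition $(c)$ --- is precisely that argument, with the bookkeeping made explicit.
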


{\em Remark} 4.1. It is known  (see \cite{EdKo}) that if $p\equiv$ const, then conditions (i) and (ii)
(written for $X={\Bbb{R}}$, the Euclidean distance and the Lebesgue measure) of Theorem  4.2 are also necessary
for the two--weight inequality
$$ \| v (Hf) \|_{L^{p(\cdot)}({\mathbb{R}})} \leq C \| w f \|_{L^{p(\cdot)}({\mathbb{R}})}, $$
where $H$ is the Hilbert transform on ${\mathbb{R}}$: $(Hf)(x)=\;
\text{p.v.}\; \int\limits_{{\mathbb{R}}} \frac{f(t)}{x-t}dt$.
\vskip+0.1cm {\em Remark} 4.2. If $p\equiv$ const and $N=M$, then
condition (i) of Theorem 4.2 is necessary for (8). This follows
from the obvious estimate $Mf(x) \geq \frac{c}{\mu ( B_{x_0x} )}
\int\limits_{B_{x_0x}} f(y) d\mu(y)$ ($f\geq 0$)  and Remark 2.2.

%Theorem 4.3
\begin{theorem}
Let $(X, d, \mu)$ be an $\hbox{SHT}$ without atoms.  Let
$1<p_-\leq p_+<\infty$. Assume  that $p$ has a minimum at $x_0$
and that $p \in \hbox{LH}(X)$. If $L=\infty$ we also assume  that
$p\equiv p_c \equiv$ const outside some ball $B(x_0,a)$. Let $v$
and $w$ be positive increasing functions on $(0,2L)$. Then the
inequality
$$ \| v(d(x_0,\cdot)) (N f) (\cdot) \|_{L^{p(\cdot)}(X)} \leq c \| w(d(x_0,\cdot))f(\cdot)\|_{L^{p(\cdot)}(X)}, \eqno{(9)} $$
 where $N$ is $M$ or $K$,       holds if the following  condition is satisfied:

$$  \sup\limits_{0<t \leq  L}\int\limits_{t< d(x_{0},x) \leq  L}\!\!\!\!\!\!\!\!\!\!\!\!\bigg(\frac{v(d(x_0,x))}{\mu(B_{x_0x}) } \bigg)^{p(x)} \bigg(\int\limits_{\overline{B}(x_0, t)} \!\!\!\!\! w^{-({\widetilde{p}}_{0})'(x)}(d(x_0,y)) d\mu(y)\bigg)^{\frac{p(x)}{({\widetilde{p}}_{0})'(x)}}d\mu(x)<\infty. $$
\end{theorem}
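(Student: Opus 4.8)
The plan is to deduce the estimate from Theorem 4.2 by verifying its three hypotheses (i), (ii), (iii) for the pair of weights $x \mapsto v(d(x_0,x))$ and $x \mapsto w(d(x_0,x))$. Since $p \in LH(X)$, Lemma 1.9 gives $p \in \mathcal{P}(1)$, and $p$ is constant outside a ball when $L = \infty$ by assumption, so the structural requirements on $p$ in Theorem 4.2 are met. Because $p$ attains its minimum at $x_0$ and $x_0 \in \overline{B}_{x_0 x}$, one has $p_0(x) = p_-$ for every $x$, hence $\widetilde{p}_0(x) = p_-$ for $d(x_0,x) \le a$ and $\widetilde{p}_0(x) = p_c$ otherwise; with this, condition (i) of Theorem 4.2 for $v(d(x_0,\cdot))$ and $w(d(x_0,\cdot))$ is precisely the displayed hypothesis of the theorem. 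So the task is to extract conditions (ii) and (iii) from that single condition, and the argument follows the scheme of the proof of Theorem 3.4, the potential analogue, in the special case $\alpha = 0$ (so that the factor $(\mu B)^{\alpha - 1}$ there becomes $(\mu B_{x_0 \cdot})^{-1}$).

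The first step is the key reduction, exactly as in Theorem 3.4: the hypothesis forces
$$ \frac{v(A^2 a_1 t)}{w(t)} \le C \qquad \text{for all } t > 0, $$
where $A$ and $a_1$ are the constants from Definition 1.10 and the triangle inequality. One sees this by restricting the supremum to the annulus $A^2 a_1 t \le d(x_0,x) < A^3 a_1 t$, factoring out $(v(A^2 a_1 t)/w(t))^{p(x)}$ via the monotonicity of $v$ and $w$, and bounding the remaining integral from below by a positive constant by means of the reverse doubling condition (Remark 1.3) together with $p \in \mathcal{P}(1)$ for small $t$ and the constancy of $p$ outside the fixed ball for large $t$ (when $L = \infty$). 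Condition (iii) then follows at once: for $y \in F_x$ one has $d(x_0,y) \le A^2 a_1 d(x_0,x)$, so monotonicity gives $v(d(x_0,y)) \le v(A^2 a_1 d(x_0,x)) \le C\, w(d(x_0,x))$, i.e. $v_+(F_x) \le b\, w(d(x_0,x))$, which is alternative $1)$ of condition $(c)$ in Theorem 4.1.

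It then remains to obtain condition (ii). For fixed $t \in (0,L]$, monotonicity of $w$ on the region $d(x_0,y) > t$ together with the uniform bound $(\widetilde{p}_1)'(x) \ge p_+' > 1$ (coming from $\widetilde{p}_1(x) \le p_+ < \infty$) let one apply Lemma 1.7(i) to the inner integral, producing a factor $(\mu B(x_0,t))^{1 - (\widetilde{p}_1)'(x)}$; raising to the outer power $p(x)/(\widetilde{p}_1)'(x)$ and then using $\widetilde{p}_1(x) \ge p_-(X) = p(x_0)$, the $LH(X)$ condition (through Proposition 1.4 and Lemma 1.9) to transfer the exponents to their value at $x_0$ on $B(x_0,t)$, and once more the bound $v(A^2 a_1 t)/w(t) \le C$, the left-hand side of (ii) is majorised by $(\mu B(x_0,t))^{-1}\, \mu(B(x_0,t)) = 1$, uniformly in $t$ — this is the computation that bounds $I_2(t)$ in the proof of Theorem 3.4. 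In the case $L = \infty$, the contributions to both integrations from the region $d(x_0,\cdot) > a$ involve only the constant exponent $p_c$ and are controlled directly by the classical weighted Hardy inequality, just as the summand $T^{(2,2)}_{v,w}$ is handled in the proof of Theorem 2.1. Having checked (i), (ii), (iii), Theorem 4.2 yields the asserted inequality $(9)$.

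The step I expect to be the main obstacle is the verification of (ii): one must keep the variable exponent $(\widetilde{p}_1)'(x)$ under uniform control so that Lemma 1.7(i) applies, transport $\mu B_{x_0 y}$ (the integration variable) back to $\mu B(x_0,t)$ using the log-H\"older / $\mathcal{P}(1)$ machinery, and exploit the minimality of $p$ at $x_0$ so that all the exponents assemble into a constant majorant — the same bookkeeping already carried out for Theorem 3.4, here lightened by the choice $\alpha = 0$.
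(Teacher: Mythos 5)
Your proposal is correct and follows essentially the same route as the paper, which omits the proof with the remark that it is "similar to that of Theorem 3.4": you reduce to Theorem 4.2, observe that the displayed hypothesis is its condition (i), extract the ratio bound $v(A^2a_1t)/w(t)\le C$ from that hypothesis via the annulus argument, and use it to deduce conditions (ii) (the $I_2(t)$ computation of Theorem 3.4 with $\alpha=0$) and (iii). This is precisely the intended reconstruction of the omitted argument.
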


Proof of this statement is similar to that of Theorem 3.4;
therefore we omit it. Notice that Lemma 1.9 yields  that $p\in
LH(X)\Rightarrow p\in {\mathcal{P}}(1).$

\begin{example}
 Let $(X, d, \mu)$ be a quasimetric measure space with  $L<\infty$. Suppose that $1<p_-\leq p_+<\infty$ and  $p\in LH(X)$.  Assume that the measure $\mu$ is upper and lower Ahlfors $1-$ regular. Let there exist $x_0\in X$ such that $p$ has a minimum at $x_0$. Then the condition
$$ S:= \sup\limits_{0<t \leq  L}\int\limits_{t< d(x_{0},x)\leq L}\!\!\!\!\!\!\!\!\!\!\!\!\bigg(\frac{v(d(x_0,x))}{\mu(B_{x_0x}) } \bigg)^{p(x)} \bigg(\int\limits_{\overline{B}(x_0, t)} \!\!\!\!\!w^{-p'(x_0)}(d(x_0,y))d\mu(y)\bigg)^{\frac{p(x)}{p'(x_0)}}d\mu(x)<\infty $$
is satisfied for the weight functions
$ v(t)= t^{1/p'(x_0)}$, $w(t)=t^{1/p'(x_0)}\ln \frac{2L}{t}$ and, consequently, by Theorem $4.3$ inequality $(9)$ holds, where $N$ is $M$ or $K$.
\end{example}

Indeed, first observe that $v$ and $w$ are both increasing on $[0,L]$. Further, it is easy to check that  $S \leq c \sup\limits_{0< t \leq L}  V(t)  \Big(W(t)\Big)^{\frac{p(x_0)}{p'(x_0)}}<\infty$ because $ W(L) <\infty$.

By using the representation formula of a general integral by improper integral and the fact that $\mu$ is Ahlfors $1-$ regular, it follows that
$ W(t) \leq C_1 \ln^{-1} \frac{2L}{t}$ and $ V(t) \leq C_2 \ln \frac{2L}{t}$ for  $0<t\leq L$,
where the positive constants does not depend on $t$. Hence the result follows.

Observe that for the constant $p$ both weights $v$ and $w$ are outside the Muckenhoupt class $A_p(X)$ (see e.g. \cite{EdKoMe}, Ch. 8).

\vskip+0.1cm

%For constant $p$, the example of this  weight pair in Euclidean spaces was first given in \cite{EdKo}.

{\bf Acknowledgement.}  The first and second authors were
partially supported by the Georgian National Science Foundation
Grant (project numbers: No. GNSF/ST09/23/3-100 and No.
GNSF/ST07/3-169). The part of this work was fulfilled in Abdus
Salam School of Mathematical sciences, GC University, Lahore. The
second and third authors are grateful to the Higher Educational
Commission of Pakistan for the financial support.
%09 23 3-100
%
%
%
%\bibliographystyle{gITR}

Authors' Addresses

V. Kokilashvili: A. Razmadze Mathematical Institute, 1. M. Aleksidze Str., 0193 Tbilisi, Georgia and Faculty
of Exact and Natural Sciences, I. Javakhishvili Tbilisi State University 2,
University St., Tbilisi 0143 Georgia \\ e-mail: kokil@rmi.acnet.ge.\\

A. Meskhi: A. Razmadze Mathematical Institute, 1. M. Aleksidze Str., 0193 Tbilisi, Georgia and Department
of Mathematics,  Faculty of Informatics and Control Systems, Georgian Technical University, 77, Kostava St.,
Tbilisi, Georgia.\\e-mail: meskhi@rmi.acnet.ge\\

M. Sarwar: Abdus Salam School of Mathematical Sciences, GC University, 68-B New Muslim Town, Lahore,
Pakistan \\e-mail: sarwarswati@gmail.com

\end{document}